\title{Miniversal
deformations of
matrices of bilinear
forms}
\date{}
\author{Andrii R. Dmytryshyn\thanks{Department
of Computing Science and HPC2N, Ume{\aa} University, SE-901 87 Ume{\aa},
Sweden. Email:
\mbox{andrii@cs.umu.se}.}
\and
Vyacheslav Futorny\thanks{Department of
Mathematics, University of S\~ao Paulo,
Brazil. Email:
\mbox{futorny@ime.usp.br}. Supported in
part by the CNPq grant (301743/2007-0)
and by the Fapesp grant
(2010/50347-9).}
    \and
Vladimir V. Sergeichuk\thanks{Corresponding author.
Institute of Mathematics, Tereshchenkivska 3,
Kiev, Ukraine. Email:
\mbox{sergeich@imath.kiev.ua}.
Supported in part by the Fapesp grants
(05/59407-6 and 2010/07278-6). The work was done while
this author was visiting the University
of S\~ao Paulo, whose hospitality is
gratefully acknowledged.}}
\renewcommand{\le}{\leqslant}
\renewcommand{\ge}{\geqslant}
\newtheorem{theorem}{Theorem}[section]
\newtheorem{lemma}{Lemma}[section]
\newtheorem{corollary}{Corollary}[section]
\theoremstyle{definition}
\newtheorem{definition}{Definition}[section]
\theoremstyle{remark}
\newtheorem{remark}{Remark}[section]
\newtheorem{example}{Example}[section]
\begin{document}
\maketitle

\begin{abstract}
V.I. Arnold [Russian Math. Surveys 26
(2) (1971) 29--43] constructed
miniversal deformations of square
complex matrices under similarity; that
is, a simple normal form to which not
only a given square matrix $A$ but all
matrices $B$ close to it can be reduced
by similarity transformations that
smoothly depend on the entries of $B$.
We construct miniversal deformations of
matrices under congruence.

{\it AMS classification:} 15A21

{\it Keywords:} Bilinear forms;
Miniversal deformations
\end{abstract}

\section{Introduction}
\label{introd}

The reduction of a matrix to its Jordan
form is an unstable operation: both the
Jordan form and a reduction
transformation depend discontinuously
on the entries of the original matrix.
Therefore, if the entries of a matrix
are known only approximately, then it
is unwise to reduce it to Jordan form.
Furthermore, when investigating a
family of matrices smoothly depending
on parameters, then although each
individual matrix can be reduced to its
Jordan form, it is unwise to do so
since in such an operation the
smoothness relative to the parameters
is lost.

For these reasons, V.I. Arnold
\cite{arn} (see also \cite{arn2,arn3})
constructed miniversal deformations of
matrices under similarity; that is, a
simple normal form to which not only a
given square matrix $A$ but all
matrices $B$ close to it can be reduced
by similarity transformations that
smoothly depend on the entries of $B$.
Miniversal deformations were also
constructed for:
\begin{itemize}
  \item real matrices with respect
      to similarity \cite{gal} (see
      also \cite{arn2,arn3}; this
      normal form was simplified in
      \cite{gar_ser});

  \item complex matrix pencils
      \cite{kag} (i.e., matrices of
      pairs of linear mappings
      $U\rightrightarrows V$; other
      normal forms of complex and
      real matrix pencils were
      constructed in
      \cite{gar_ser,k-s_triang},
      see also \cite{gar_ser1});

  \item complex and real
      contragredient matrix pencils
      \cite{gar_ser} (i.e.,
      matrices of pairs of counter
      linear mappings
      $U\rightleftarrows V$);

   \item matrices of selfadjoint
       operators on a complex or
       real vector space with
       scalar product given by a
       skew-symmetric, or
       symmetric, or Hermitian
       nonsingular form, see
       \cite{gal2,djo,pat1,pat3}
       and \cite[Appendix
       6]{arn_mex};

  \item matrices of linear
      operators on a unitary space
     \cite{ben}. Deformations of
     selfadjoint operators
     (Hermitian forms) on a unitary
     space are studied in
     \cite{von}.

\end{itemize}

All matrices that we consider are
complex matrices.

In Section \ref{s2}, we formulate
Theorem \ref{teo2} that gives
miniversal deformations of matrices of
bilinear forms; i.e., miniversal
deformations of matrices with respect
to \emph{congruence transformations}
\begin{equation*}\label{djs}
A\mapsto S^TAS,\qquad S \text{ is
nonsingular}
\end{equation*}
(and hence miniversal deformations of
pairs consisting of a symmetric matrix
and a skew-symmetric matrix since each
square matrix can be expressed uniquely
as their sum; see Remark \ref{HRe}). A
more abstract form of Theorem
\ref{teo2}, in the spirit of Arnold's
article \cite{arn}, is given in Theorem
\ref{teojy} of Section \ref{sect2}.

We prove Theorem \ref{teojy} in
Sections \ref{sect3}--\ref{s7}. The
proof is based on Lemma \ref{t2.1},
which gives a method for constructing
miniversal deformations. This lemma
follows from a general theory of
miniversal deformations. In Section
\ref{sect4} we give its constructive
proof and find a congruence
transformation that reduces a matrix to
its miniversal deformation. Analogous
interactive methods for constructing
transforming matrices in the reduction
to versal deformations of matrices
under similarity and of matrix pencils
under equivalence were developed in
\cite{gar_mai,mai,mai1}.

A preliminary version of this article
appeared in 2007 preprint \cite{f_s};
it was used in \cite{f_s_3x3} for
constructing the Hasse diagram of the
closure ordering on the set of
congruence classes of $3\times 3$
matrices. The authors also recently
obtained miniversal deformations of
matrices of
\begin{itemize}
  \item sesquilinear forms
      \cite{def-sesq} (which allows
      to construct miniversal
deformations of pairs $(H_1,H_2)$
of Hermitian matrices because each
square matrix can be expressed
uniquely as their sum $H_1+iH_2$),
  \item pairs of skew-symmetric
      forms \cite{dm1}, and
  \item pairs of symmetric forms
      \cite{dm2}.
\end{itemize}

\section{The main
theorem in terms of holomorphic matrix
functions}\label{s2}

Define the $n\times n$ matrices:
\begin{equation*}\label{1aa}
J_n(\lambda):=\begin{bmatrix}
\lambda&1&&0\\&\lambda&\ddots&\\&&\ddots&1
\\ 0&&&\lambda
\end{bmatrix},\qquad
\Gamma_n :=
\begin{bmatrix} 0&&&&
\udots
\\&&&-1&\udots
\\&&1&1\\ &-1&-1& &\\
1&1&&&0
\end{bmatrix}.
\end{equation*}

We use the following canonical form of
complex matrices for congruence.

\begin{theorem}[{\cite{hor-ser}}]
\label{t2a}  Each square complex matrix
is congruent to a direct sum,
determined uniquely up to permutation
of summands, of matrices of the form
\begin{equation}
\label{can}
H_m(\lambda):=
\begin{bmatrix}0&I_m\\
J_m(\lambda) &0
\end{bmatrix}\ (\lambda \ne 0,\ \lambda\ne
(-1)^{m+1}),
 \quad
\Gamma_n,
 \quad
J_k(0)
  \end{equation}
in which $\lambda\in\mathbb C$ is
determined up to replacement by
$\lambda^{-1}$.
\end{theorem}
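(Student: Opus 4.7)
The plan is to combine two classical ideas: reduce the nonsingular part of $A$ via the \emph{cosquare} $C_A := A^{-T}A$, which transforms by similarity under congruence, and peel off singular direct summands by a regularization procedure.

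First I would show that any square complex matrix is congruent to $A_{\text{reg}} \oplus A_{\text{sing}}$, where $A_{\text{reg}}$ is nonsingular and $A_{\text{sing}}$ is a direct sum of nilpotent Jordan blocks $J_k(0)$. This proceeds by induction on size: if $A$ is singular, pick a nonzero vector in the right kernel and study the chain of iterated actions of $A$ and $A^T$; a suitable basis change peels off a $J_k(0)$ summand by congruence, with $k$ determined by the sizes of the iterated left and right kernels of $A$. This yields the $J_k(0)$ blocks in \eqref{can} and reduces the problem to the nonsingular case.

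For nonsingular $A$, the key identity is $A\,C_A\,A^{-1} = C_A^{-T}$, which forces the Jordan form of $C_A$ to be invariant under $\lambda \mapsto \lambda^{-1}$. A direct block computation gives $C_{H_m(\lambda)} \sim J_m(\lambda) \oplus J_m(\lambda^{-1})$ and $C_{\Gamma_n} \sim J_n((-1)^{n+1})$. So I would assign to each unordered pair $\{J_m(\lambda), J_m(\lambda^{-1})\}$ of Jordan blocks of $C_A$ with $\lambda \ne (-1)^{m+1}$ a single $H_m(\lambda)$ summand, and to each Jordan block $J_n((-1)^{n+1})$ a single $\Gamma_n$ summand; together these exhaust all blocks in the Jordan form of $C_A$ and explain the indeterminacy $\lambda \leftrightarrow \lambda^{-1}$ in the canonical form.

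The main obstacle is the case of eigenvalues $\pm 1$, where the symmetry $\lambda \mapsto \lambda^{-1}$ is trivial and one must decide whether each Jordan block of $C_A$ contributes half of an $H$-block pair or a single $\Gamma$-block. The exclusion $\lambda \ne (-1)^{m+1}$ in \eqref{can} encodes exactly this dichotomy; proving it rigorously requires showing that $\Gamma_n$ is indecomposable under congruence, while $H_m((-1)^{m+1})$ is congruent to $\Gamma_m \oplus \Gamma_m$ (consistent with its cosquare being two copies of $J_m((-1)^{m+1})$). Uniqueness will then follow because the Jordan type of $C_{A_{\text{reg}}}$ together with the iterated ranks controlling the sizes of the $J_k(0)$ summands form a complete system of congruence invariants, so the multiplicities of all three families of canonical blocks are determined up to the stated replacement $\lambda \leftrightarrow \lambda^{-1}$.
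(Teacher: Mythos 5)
The paper does not prove Theorem \ref{t2a}; it imports it from \cite{hor-ser} and points to \cite{ser_izv,hor-ser_anyfield,hor-ser_regul,hor-ser_can} for proofs. Your outline follows the same general strategy as that literature (regularization to split off the $J_k(0)$ part, then analysis of the cosquare $C_A=A^{-T}A$ for the nonsingular part), and the individual facts you state are correct: congruence acts on $C_A$ by similarity, $AC_AA^{-1}=C_A^{-T}$, $C_{H_m(\lambda)}\sim J_m(\lambda)\oplus J_m(\lambda^{-1})$, $C_{\Gamma_n}\sim J_n((-1)^{n+1})$, and $H_m((-1)^{m+1})$ is congruent to $\Gamma_m\oplus\Gamma_m$ while $\Gamma_n$ is indecomposable.

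There is, however, a genuine gap at the heart of the argument: for the nonsingular part you pass from the Jordan form of $C_A$ to a congruence decomposition of $A$ by ``assigning'' an $H$- or $\Gamma$-summand to each Jordan block (or block pair) of the cosquare. That step presupposes exactly what has to be proved, namely that the similarity class of the cosquare is a \emph{complete} congruence invariant of nonsingular complex matrices --- equivalently, that every nonsingular $A$ actually is congruent to the direct sum you assign to it. Knowing the Jordan form of $C_A$ and knowing the cosquares of the model blocks only gives the uniqueness direction (no two distinct canonical sums share a cosquare, given the parity constraints at $\lambda=\pm1$); it gives no congruence transformation realizing the decomposition. That this implication is not formal is shown by the real case, where $[1]$ and $[-1]$ have the same cosquare but are not congruent over $\mathbb R$. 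Over $\mathbb C$ the completeness statement is true, but proving it is the main content of the theorem: one must use that $C_A$ is an isometry of the form $A$ (since $C_A^TAC_A=A$), split the space into $A$-orthogonal generalized eigenspaces of $C_A$ for the pairs $\{\lambda,\lambda^{-1}\}$, and in the self-paired cases $\lambda=\pm1$ classify the resulting unipotent isometries, which reduces to the triviality of nondegenerate symmetric (or alternating) forms over $\mathbb C$. None of this appears in your sketch, so as written the existence half of the theorem is asserted rather than proved. The regularization step is likewise only gestured at, but that part is comparatively routine.
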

This canonical form was obtained in
\cite{hor-ser} basing on \cite[Theorem
3]{ser_izv} and was generalized to
other fields in
\cite{hor-ser_anyfield}; a direct proof
that this form is canonical was given
in \cite{hor-ser_regul, hor-ser_can}.

Let
\begin{equation}\label{kus}
A_{\text{can}}=
\bigoplus_i
H_{p_i}(\lambda_i)
 \oplus
\bigoplus_j\Gamma_{q_j}
 \oplus
\bigoplus_l
J_{r_l}(0),\qquad
r_1\ge r_2\ge\dots,
\end{equation}
be the canonical form for congruence of
an $n\times n$ matrix $A$. Then
\begin{equation}\label{gdr}
S^TAS=A_{\text{can}}
\end{equation}
for a nonsingular $S$. All matrices
that are close to $A$ are represented
in the form $A +E $ in which
$E\in\mathbb C^{n\times n}$ is close to
$0_n$.

Let ${\cal S}(E)$ be an $n\times n$
matrix function that is holomorphic on
a neighborhood of $0_n$, which means
that ${\cal S}(E)$ is an $n\times n$
matrix whose entries are power series
in $n^2$ entries of $E$, and these
series are convergent in this
neighborhood of $0_n$. Let ${\cal
S}(0_n)=S$ in which $S$ is from
\eqref{gdr}. We define the matrix
function ${\cal D}(E)$ by
\begin{equation}\label{ksy}
A_{\text{can}} +{\cal
D}(E) ={\cal S}(E)^T
(A+E) {\cal
S}(E).
\end{equation}
Then ${\cal D}(E)$ is holomorphic at
$0_n$ and ${\cal D}(0_n)=0_n$. Our
purpose is to find a simple form of
${\cal D}(E)$ by choosing a suitable
${\cal S}(E)$. In Theorem \ref{teo2},
we give ${\cal D}(E)$ with the minimal
number of nonzero entries that can be
attained by using transformations
\eqref{ksy}.

By a \emph{$(0,\!*)$ matrix} we mean a
matrix whose entries are $0$ and $*$.
Theorem \ref{teo2} involves the
following $(0,\!*)$ matrices, in which
all stars are placed in one row or
column:

$\bullet$ The $m\times n$ matrices
\[0^{\nwarrow}:=\left[\begin{MAT}(e){cccc}
*&&&\\
\vdots&&0&\\
*&&&\\
\end{MAT}\right]\text{ if $m\le n$, or }
\left[\begin{MAT}(e){ccc}
*&\cdots&*\\
&&\\
&0&\\
&&\\
\end{MAT}\right] \text{ if $m\ge n$},
\]
\[0^{\nwvdash}:=\left[\begin{MAT}(b){cccccccc}
*&&&&&&&\\
0&&&&&&&\\
*&&&&0&&&\\
0&&&&&&&\\
\vdots&&&&&&&
\\
\end{MAT}\right]\text{ if $m\le n$, or }
\left[\begin{MAT}(e){ccccc}
*&0&*&0&\cdots\\
&&&&\\
&&&&\\
&&0&&\\
&&&&\\
&&&&\\
&&&&\\
\end{MAT}\right]\text{ if $m\ge n$},
\]
\[0^{\nwmodels}:=\left[\begin{MAT}(b){cccccccc}
0&&&&&&&\\
*&&&&&&&\\
0&&&&0&&&\\
*&&&&&&&\\
\vdots&&&&&&&
\\
\end{MAT}\right]\text{ if $m\le n$, or }
\left[\begin{MAT}(e){ccccc}
0&*&0&*&\cdots\\
&&&&\\
&&&&\\
&&0&&\\
&&&&\\
&&&&\\
&&&&
\\
\end{MAT}\right]\text{ if $m\ge n$}
\]
(if $m=n$ then we can use both the left
and the right matrix).

$\bullet$ The matrices
\[0^{\nearrow}, 0^{\nevdash},
0^{\nemodels};\qquad 0^{\searrow},
0^{\sevdash}, 0^{\semodels};\qquad
0^{\swarrow}, 0^{\swvdash},
0^{\swmodels}\] are obtained from
$0^{\nwarrow}, 0^{\nwvdash},
0^{\nwmodels}$ by the clockwise
rotation through $90^{\circ}$;
respectively, $180^{\circ}$; and
$270^{\circ}$.

$\bullet$ The $m\times n$ matrices
\begin{equation*}\label{bjhf}
0^{\updownarrow}:=\left[\begin{MAT}(b){ccc}
*&\cdots&*\\
&&\\
&0&\\
&&\\
\end{MAT}\right]\quad \text{or}\quad
\left[\begin{MAT}(b){ccc}
&&\\
&0&\\
&&\\
*&\cdots&*\\
\end{MAT}\right]\end{equation*}
($0^{\updownarrow}$ can be taken in any
of these forms), and
\begin{equation}\label{hui}
{\cal P}_{mn}:=\begin{bmatrix}
\begin{matrix}
0&\dots& 0
  \\
\vdots&\ddots& \vdots
\end{matrix}&0\\
\begin{matrix}
0& \dots&
0\end{matrix}&
\begin{matrix}
0\ *\ \dots\ *
\end{matrix}
\end{bmatrix}\quad
\text{in which } m\le n
\end{equation}
(${\cal P}_{mn}$ has $n-m-1$ stars if
$m<n$).

Let $A_{\text{can}}=A_1\oplus
A_2\oplus\dots\oplus A_t$ be the
decomposition \eqref{kus}, and let
${\cal D}(E)$ in \eqref{ksy} be
partitioned conformably to the
partition of $A_{\text{can}}$:

\begin{equation}\label{grsd}
{\cal D}={\cal D}(E)
=\begin{bmatrix}
{\cal
D}_{11}&\dots&{\cal
D}_{1t}
 \\
\vdots&\ddots&\vdots\\
{\cal
D}_{t1}&\dots&{\cal
D}_{tt}
\end{bmatrix}.
\end{equation}
Write
\begin{equation}\label{lhs}
{\cal D}(A_i):={\cal
D}_{ii},\qquad {\cal
D}(A_i,A_j) :=({\cal
D}_{ji},{\cal
D}_{ij})\ \ \text{if
}i<j.
\end{equation}

Our main result is the following
theorem, which we reformulate in a more
abstract form in Theorem \ref{teojy}.

\begin{theorem}[{\cite{f_s}}]\label{teo2}
Let $A$ be a square complex matrix, let
$A_{\text{can}}$  be its canonical
matrix \eqref{kus} for congruence, and
let $S$ be a nonsingular matrix such
that $S^TAS=A_{\text{can}}$. Then all
matrices $A+E$ that are sufficiently
close to $A$ can be simultaneously
reduced by some transformation
\begin{equation}\label{tef}
A+E\mapsto {\cal
S}(E)^T (A+E) {\cal
S}(E),\quad\begin{matrix}
\text{${\cal S}(E)$
is nonsingular and}\\
\text{holomorphic at zero, } {\cal
S}(0)=S
\end{matrix}
\end{equation}
to the form $A_{\text{can}} +{\cal D}$
in which ${\cal D}$ is a $(0,\!*)$
matrix whose stars represent entries
that depend holomorphically on the
entries of $E$, the number of stars in
${\cal D}$ is minimal that can be
achieved by transformations of the form
\eqref{tef}, and the blocks of ${\cal
D}$ with respect to the partition
\eqref{grsd} are defined in the
notation \eqref{lhs} as follows:

\begin{itemize}
  \item[{\rm(i)}] The diagonal
      blocks of ${\cal D}$ are
      defined by
\begin{equation}\label{KEV}
{\cal
D}(H_{m}(\lambda))=
  \begin{cases}
    \begin{bmatrix}
0&0
 \\ 0^{\swarrow}&0
\end{bmatrix}
 &\text{if $\lambda\ne\pm 1$ $($all blocks are $m\times m)$},
                           \\[5mm]
 \begin{bmatrix}
0^{\nwmodels}&0
 \\ 0^{\swarrow}&0^{\semodels}
\end{bmatrix}
 &\text{if $\lambda=1$
 $(m$ is even by \eqref{can}$)$},
                      \\[5mm]
 \begin{bmatrix}
0^{\nwvdash}&0
 \\ 0^{\swarrow}&0^{\sevdash}
\end{bmatrix}
 &\text{if $\lambda=-1$
 $(m$ is odd by \eqref{can}$)$;}
  \end{cases}
\end{equation}
\begin{equation} \label{uwm}
{\cal D} (\Gamma_n)=
  \begin{cases}
0^{\nwvdash}
 & \text{if $n$ is even}, \\
0^{\nwmodels}
& \text{if $n$ is
odd};
  \end{cases}
\end{equation}
\begin{equation}\label{lsiu}
{\cal
D}(J_n(0))=0^{\swvdash}.
\end{equation}

  \item[{\rm(ii)}] The off-diagonal
      blocks of ${\cal D}$ whose
      horizontal and vertical
      strips contain summands of
      $A_{\text{can}}$ of the same
      type are defined by
\begin{equation}\label{lsiu1}
{\cal D}
(H_m(\lambda),\,
H_n(\mu))
       =
  \begin{cases}
(0,\:0) &\text{if
$\lambda\ne\mu^{\pm
1}$},
      \\
    \left(\begin{bmatrix}
0^{\nwarrow}&0
 \\ 0&0^{\searrow}
\end{bmatrix},\:0
\right)
 &\text{if $\lambda=\mu^{-1}\ne\pm 1$},
                           \\[5mm]
    \left(\begin{bmatrix}
0&0^{\nearrow}
 \\ 0^{\swarrow}&0
\end{bmatrix},\:0
\right)
 &\text{if $\lambda=\mu\ne\pm 1$},
                           \\[5mm]
    \left(\begin{bmatrix}
0^{\nwarrow}&0^{\nearrow}
 \\ 0^{\swarrow}&0^{\searrow}
\end{bmatrix},\:0
\right)
 &\text{if $\lambda=\mu=\pm
 1$};
  \end{cases}
\end{equation}
\begin{equation}\label{lsiu2}
{\cal D}
(\Gamma_m,\Gamma_n)=
  \begin{cases}
(0,\: 0)
 & \text{if $m-n$ is odd}, \\
(0^{\nwarrow},\:0)
 & \text{if $m-n$ is even};
  \end{cases}
\end{equation}
\begin{equation}\label{lsiu3}
{\cal D}
(J_m(0),J_n(0))=
  \begin{cases}
(0^{\swvdash},\:
0^{\swvdash})
 & \text{if $m\ge n$ and $n$ is even},
    \\
(
0^{\swvdash}+{\cal
P}_{nm},\:0^{\swvdash})
 & \text{if $m\ge n$ and $n$ is
 odd}.
  \end{cases}
\end{equation}

  \item[{\rm(iii)}] The
      off-diagonal blocks of ${\cal
      D}$ whose horizontal and
      vertical strips contain
      summands of $A_{\text{can}}$
      of different types are
      defined by
\begin{equation}\label{lsiu4}
{\cal D}
(H_m(\lambda),\Gamma_n)=
\begin{cases} (0,\: 0)
 & \text{if
 $\lambda\ne(-1)^{n+1}$}, \\
(
[0^{\nwarrow}\
0^{\nearrow}],\: 0)
 & \text{if
 $\lambda=(-1)^{n+1}$};
  \end{cases}
 \end{equation}
\begin{equation}\label{lsiu5}
{\cal D}
(H_m(\lambda),J_n(0))=
  \begin{cases}
(0,\: 0)
 & \text{if $n$ is even}, \\
(
0^{\updownarrow},\: 0)
 & \text{if $n$ is odd};
  \end{cases}
\end{equation}
\begin{equation}\label{lsiu6}
{\cal D}
(\Gamma_m,J_n(0))=
  \begin{cases}
(0,\: 0)
 & \text{if $n$ is even}, \\
(
0^{\updownarrow},\: 0)
 & \text{if $n$ is
 odd}.
  \end{cases}
 \end{equation}
\end{itemize}
\end{theorem}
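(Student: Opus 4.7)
My plan is to invoke Lemma \ref{t2.1} in order to reduce Theorem \ref{teo2} to a linear-algebraic problem. Since $(I+\varepsilon X)^{T}A_{\text{can}}(I+\varepsilon X)=A_{\text{can}}+\varepsilon(X^{T}A_{\text{can}}+A_{\text{can}}X)+O(\varepsilon^{2})$, the tangent space to the congruence orbit of $A_{\text{can}}$ at $A_{\text{can}}$ is
\[
T(A_{\text{can}})=\{\,X^{T}A_{\text{can}}+A_{\text{can}}X:X\in\mathbb{C}^{n\times n}\,\}.
\]
A miniversal deformation is determined by any direct complement $\mathcal{V}$ of $T(A_{\text{can}})$ in $\mathbb{C}^{n\times n}$; the $(0,\!*)$-matrix $\mathcal{D}$ displayed in the theorem will be a generic element of such a $\mathcal{V}$. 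Thus the theorem amounts to two assertions: the subspace $\mathcal{V}$ spanned by the star-positions in \eqref{KEV}--\eqref{lsiu6} is a direct complement of $T(A_{\text{can}})$, and its dimension is minimal (equal to the codimension of the congruence orbit).

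\textbf{Block decomposition.} Using $A_{\text{can}}=A_{1}\oplus\dots\oplus A_{t}$, the $(i,j)$-block of $X^{T}A_{\text{can}}+A_{\text{can}}X$ equals $X_{ji}^{T}A_{j}+A_{i}X_{ij}$, which couples only the pair $X_{ij},X_{ji}$. Consequently the cokernel decomposes into independent pieces indexed by diagonal blocks $(i=j)$ and by pairs $i<j$. The diagonal problem is the single equation $X_{ii}^{T}A_{i}+A_{i}X_{ii}=C_{ii}$, and the off-diagonal problem is the system
\[
X_{ji}^{T}A_{j}+A_{i}X_{ij}=C_{ij},\qquad X_{ij}^{T}A_{i}+A_{j}X_{ji}=C_{ji}.
\]
This is precisely why the statement is organised into the three parts (i)--(iii): self-pairings among $H_m(\lambda)$, $\Gamma_n$, $J_k(0)$; same-type cross-pairings; and different-type cross-pairings.

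\textbf{Case-by-case solution.} For each case I would compute the cokernel of the appropriate block map and verify that the star-pattern in the theorem realises a section of it. The $H_{m}(\lambda)$-diagonal equation with $\lambda\ne\pm 1$ decouples via the two off-diagonal blocks of $H_{m}(\lambda)$ into two Sylvester-type equations, whose spectral nondegeneracy leaves only the single $0^{\swarrow}$ pattern. For $\lambda=\pm 1$ additional obstructions appear, producing the $0^{\nwmodels}$ or $0^{\nwvdash}$ patterns shown in \eqref{KEV}. The $\Gamma_{n}$ and $J_{k}(0)$ diagonal equations are handled by induction along anti-diagonals, using the (near-)nilpotent structure of these blocks. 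Off-diagonal pair equations reduce again to Sylvester-type problems whose obstructions are governed by spectral compatibility: they appear precisely when $\lambda=\mu^{\pm 1}$ for $H$-$H$ pairs, when $\lambda=(-1)^{n+1}$ for $H$-$\Gamma$ pairs, and when the appropriate parity condition is satisfied for pairs involving $J_{k}(0)$---matching each case of \eqref{lsiu1}--\eqref{lsiu6}.

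\textbf{Main obstacle and minimality.} The principal difficulty is combinatorial: the case list is long, and in the degenerate spectral situations ($\lambda=\pm 1$ in \eqref{KEV}, $\lambda=(-1)^{n+1}$ in \eqref{lsiu4}, and mixed-parity $J_{m}(0),J_{n}(0)$ in \eqref{lsiu3} where the extra summand ${\cal P}_{mn}$ appears) the exact positions of the additional stars must be read off from a basis adapted to the generalized eigenstructure of the equation. To certify \emph{mini}versality rather than mere versality, I would, as a final check, compare the total number of stars in $\mathcal{D}$ with a direct computation of the codimension of the congruence orbit, obtained by summing the known contributions of each canonical summand to $n^{2}-\dim\mathrm{Stab}(A_{\text{can}})$; agreement of the two counts shows that no star can be removed, completing the proof.
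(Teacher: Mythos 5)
Your overall strategy coincides with the paper's: invoke Lemma \ref{t2.1} to convert miniversality into the direct-sum condition ${\mathbb C}^{n\times n}=T(A_{\rm can})\oplus{\cal D}({\mathbb C})$, decompose block-wise as in Lemma \ref{thekd} so that the problem splits into one condition per diagonal block and one per pair $i<j$, and then check each of the nine cases. However, as written the proposal has a genuine gap: the case-by-case verification is not a routine afterthought but is the entire mathematical content of the theorem (it occupies Sections 5--7 of the paper), and you only assert that it can be done. In particular, none of the degenerate cases is actually resolved: you do not exhibit why exactly the patterns $0^{\nwmodels}$, $0^{\nwvdash}$, $0^{\semodels}$, $0^{\sevdash}$ arise for $\lambda=\pm1$, why the parity of $n$ decides between $0^{\nwvdash}$ and $0^{\nwmodels}$ for $\Gamma_n$, where the extra summand ${\cal P}_{nm}$ in \eqref{lsiu3} comes from, or why a unique normal form (not merely some complement) is reached --- i.e., that each affine subspace parallel to the tangent space meets ${\cal D}(\mathbb C)$ in \emph{exactly one} point. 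The paper does this by explicit inductions along skew diagonals, splitting $S$ into symmetric and skew-symmetric parts, and analysing when vectors of the form $(\lambda x_1+x_r,\dots,\lambda x_r+x_1)$ span everything; none of that machinery appears in your outline.

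Two smaller points. First, your description of the $H_m(\lambda)$ diagonal case is inaccurate: after killing the $(1,2)$ block one is left with \emph{three} equations, and the surviving pattern $0^{\swarrow}$ in the $(2,1)$ position comes from the commutator equation $SJ_m(\lambda)-J_m(\lambda)S$, which is spectrally \emph{degenerate} for every $\lambda$; moreover the $(1,1)$ and $(2,2)$ equations are of the congruence type $SJ+S^T$, whose solvability criterion is $\lambda\ne\pm1$ and has no analogue among ordinary Sylvester equations, so ``spectral nondegeneracy'' is not the right organizing principle there. Second, your closing minimality check is redundant: once the \emph{direct}-sum decomposition is established (rather than just a sum), Lemma \ref{t2.1} already yields miniversality, so no independent computation of the orbit codimension is needed; the paper cites such a computation only as corroboration.
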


For each $A\in{\mathbb C}^{n\times n}$,
the vector space
\begin{equation}\label{eelie}
T(A):=\{C^TA+AC\,|\,C\in{\mathbb
C}^{n\times n}\}
\end{equation}
is the tangent space to the congruence
class of $A$ at the point $A$ since
\begin{equation}\label{fwt}
(I+\varepsilon
C)^TA(I+\varepsilon C)
=A+\varepsilon(C^TA+AC)
+\varepsilon^2C^TAC
\end{equation}
for all $C\in{\mathbb C}^{n\times n}$
and $\varepsilon\in\mathbb C$.

The matrix $\cal D$ from Theorem
\ref{teo2} was constructed such that
\begin{equation}\label{jyr}
{\mathbb C}^{\,n\times
n}=T(A_{\text{can}}) \oplus {\cal
D}({\mathbb C})
\end{equation}
in which ${\cal D}({\mathbb C})$ is the
vector space of all matrices obtained
from $\cal D$ by replacing its stars by
complex numbers. Thus, the number of
stars in $\cal D$ is equal to the
codimension of the congruence class of
$A_{\text{can}}$; it was independently
calculated in \cite{t_d}. The
codimensions of *congruence classes of
canonical matrices for *congruence were
calculated in \cite{t_d_*}. Simplest
miniversal deformations of matrix
pencils and contagredient matrix
pencils \cite{gar_ser}, canonical
matrices for *congruence
\cite{def-sesq}, and canonical pairs of
skew-symmetric matrices \cite{dm2} were
constructed by analogous methods.

\emph{Theorem \ref{teo2} will be proved
as follows:} we first prove in Lemma
\ref{t2.1} that each $(0,\!*)$ matrix
that satisfies \eqref{jyr} can be taken
as $\cal D$ in Theorem \ref{teo2}, and
then verify that $\cal D$ from Theorem
\ref{teo2} satisfies \eqref{jyr}.

\begin{example}
    \label{colh}
Let $A$ be any $2\times 2$ or $3\times
3$ matrix. Then all matrices $A+E$ that
are sufficiently close to $A$ can be
simultaneously reduced by
transformations \eqref{tef} to one of
the following forms
\begin{align*}
                           &
\begin{bmatrix} 0&\\
&0
 \end{bmatrix}+
\begin{bmatrix}
 *&*\\ *&*
 \end{bmatrix},
                            &&
\begin{bmatrix} 1&\\
&0
 \end{bmatrix}+
\begin{bmatrix}
 0&0\\ *&*
 \end{bmatrix},
                         &&
\begin{bmatrix} 1&\\
&1
 \end{bmatrix}+
\begin{bmatrix}
 0&0\\ *&0
 \end{bmatrix},
                         \\ &
\begin{bmatrix} 0&1\\
-1&0
 \end{bmatrix}+
\begin{bmatrix}
 *&0\\ *&*
 \end{bmatrix},
                         &&
\begin{bmatrix} 0&1\\
\lambda &0
 \end{bmatrix}+
\begin{bmatrix}
 0&0\\ *&0
 \end{bmatrix}\ (\lambda\ne\pm 1),
                        &&
\begin{bmatrix} 0&-1\\
1&1
 \end{bmatrix}+
\begin{bmatrix}
 *&0\\ 0&0
 \end{bmatrix},
\end{align*}
or, respectively,
\begin{align*}
&\begin{bmatrix} 0&&\\
&0&\\&&0
 \end{bmatrix}+
\begin{bmatrix}
 *&*&*\\  *&*&*\\ *&*&*
 \end{bmatrix},
                             &&
\begin{bmatrix} 1&&\\
&0&\\&&0
 \end{bmatrix}+
\begin{bmatrix}
 0&0&0\\  *&*&*\\ *&*&*
 \end{bmatrix},
                            \\&
\begin{bmatrix} 1&&\\
&1&\\&&0
 \end{bmatrix}+
\begin{bmatrix}
 0&0&0\\  *&0&0\\ *&*&*
 \end{bmatrix},
                           &&
\begin{bmatrix}
1&&\\ &1&\\&&1
 \end{bmatrix}+
\begin{bmatrix}
 0&0&0\\  *&0&0\\
 *&*&0
 \end{bmatrix},
                        \\&
\begin{bmatrix} 0&1&\\
-1&0&\\&&0
 \end{bmatrix}+
\begin{bmatrix}
 *&0&0\\  *&*&0\\ *&*&*
 \end{bmatrix},
                       &&
\begin{bmatrix}
0&1&\\ \lambda
&0&\\&&0
 \end{bmatrix}+
\begin{bmatrix}
 0&0&0\\  *&0&0\\
 *&*&*
 \end{bmatrix}\ (\lambda\ne0,\pm 1),
                      \\&
\begin{bmatrix}
0&1&\\ 0 &0&\\&&0
 \end{bmatrix}+
\begin{bmatrix}
 0&0&0\\  *&0&*\\
 *&0&*
 \end{bmatrix},
                          &&
\begin{bmatrix}
0&-1&\\ 1&1&\\&&0
 \end{bmatrix}+
\begin{bmatrix}
 *&0&0\\  0&0&0\\ *&*&*
 \end{bmatrix},
                           \\&
\begin{bmatrix} 0&1&\\
-1&0&\\&&1
 \end{bmatrix}+
\begin{bmatrix}
 *&0&0\\  *&*&0\\
 0&0&0
 \end{bmatrix},
                         &&
\begin{bmatrix}
0&1&\\ \mu &0&\\&&1
 \end{bmatrix}+
\begin{bmatrix}
 0&0&0\\  *&0&0\\
 0&0&0
 \end{bmatrix}\ (\mu\ne \pm 1),
                     \\&
 \begin{bmatrix}
0&-1&\\ 1&1&\\&&1
 \end{bmatrix}+
\begin{bmatrix}
 *&0&0\\  0&0&0\\
 0&0&0
 \end{bmatrix},
                      &&
\begin{bmatrix}
0&1&0\\ 0&0&1\\0&0&0
 \end{bmatrix}+
\begin{bmatrix}
 0&0&0\\  0&0&0\\
 *&0&*
 \end{bmatrix},
                     \\&
 \begin{bmatrix}
0&0&1\\ 0&-1&-1\\1&1&0
 \end{bmatrix}+
\begin{bmatrix}
 0&0&0\\  *&0&0\\
 0&0&0
 \end{bmatrix}.
\end{align*}

Each of these matrices has the form
$A_{\rm can}+{\cal D}$ in which $A_{\rm
can}$ is a direct sum of blocks of the
form \eqref{can} (the zero entries
outside of these blocks in $A_{\rm
can}$ are not shown) and the stars in
${\cal D}$ are complex numbers that
tend to zero as $E$ tends to $0$. The
number of stars is the smallest that
can be attained by using
transformations \eqref{tef}; it is
equal to the codimension of the
congruence class of $A$.
\end{example}

\section{The main theorem in terms of
miniversal deformations} \label{sect2}

The notion of a miniversal deformation
of a matrix with respect similarity was
given by Arnold \cite{arn} (see also
\cite[\S\,30B]{arn3}). It can be
extended to matrices with respect to
congruence as follows.

A \emph{deformation} of a matrix
$A\in{\mathbb C}^{n\times n}$ is a
holomorphic map ${\cal A}: \Lambda\to
{\mathbb C}^{n\times n}$ in which
$\Lambda\subset \mathbb C^k$ is a
neighborhood of $\vec 0=(0,\dots,0)$
and ${\cal A}(\vec 0)=A$.

Let ${\cal A}$ and ${\cal B}$ be two
deformations of $A$ with the same
parameter space $\mathbb C^k$. We
consider ${\cal A}$ and ${\cal B}$ as
\emph{equal} if they coincide on some
neighborhood of $\vec 0$ (this means
that each deformation is a germ). We
say that ${\cal A}$ and ${\cal B}$ are
\emph{equivalent} if the identity
matrix $I_n$ possesses a deformation
${\cal I}$ such that
\begin{equation*}\label{kft}
{\cal B}(\vec\lambda)=
{\cal
I}(\vec\lambda)^{T}
{\cal A}(\vec\lambda)
{\cal I}(\vec\lambda)
\end{equation*}
for all $\vec\lambda=(\lambda_1,\dots,
\lambda_k)$ in some neighborhood of
$\vec 0$.

\begin{definition}\label{d}
A deformation ${\cal
A}(\lambda_1,\dots,\lambda_k)$ of a
square matrix $A$ is called
\emph{versal} if every deformation
${\cal B}(\mu_1,\dots,\mu_l)$ of $A$ is
equivalent to a deformation of the form
${\cal A}(\varphi_1(\vec\mu),\dots,
\varphi_k(\vec\mu))$ in which
$\vec\mu=(\mu_1,\dots,\mu_l)$, all
$\varphi_i(\vec\mu)$ are power series
that are convergent in a neighborhood
of $\vec 0$, and $\varphi_i(\vec 0)=0$.
A versal deformation ${\cal
A}(\lambda_1,\dots,\lambda_k)$ of $A$
is called \emph{miniversal} if there is
no versal deformation that has less
than $k$ parameters.
\end{definition}

For each $(0,\!*)$ matrix ${\cal D}$,
we denote by ${\cal D}(\mathbb C)$ the
space of all matrices obtained from
$\cal D$ by replacing the stars with
complex numbers (as in \eqref{jyr}) and
 by ${\cal D}(\vec
{\varepsilon})$ the parameter matrix
obtained from ${\cal D}$ by replacing
each $(i,j)$ star with the parameter
${\varepsilon}_{ij}$. This means that
\begin{equation}\label{a2z}
{\cal D}(\mathbb C):=
\bigoplus_{(i,j)\in{\cal
I}({\cal D})} {\mathbb
C} E_{ij}, \qquad
{\cal D}(\vec
{\varepsilon}):=
\sum_{(i,j)\in{\cal
I}({\cal D})}
\varepsilon_{ij}E_{ij},
\end{equation}
in which every $E_{ij}$ is the matrix
unit (its $(i,j)$ entry is $1$ and the
others are $0$) and
\begin{equation*}\label{a2za}
{\cal I}({\cal
D})\subseteq
\{1,\dots,n\}\times
\{1,\dots,n\}
\end{equation*}
is the set of indices of the stars in
${\cal D}$.

We say that a deformation of $A$ is
\emph{simplest} if it has the form
$A+{\cal D}(\vec {\varepsilon})$ in
which $\cal D$ is a $(0,\!*)$ matrix.
Definition \ref{d} of versality for a
simplest deformation can be
reformulated in the spirit of Section
\ref{s2} as follows.

\begin{definition}\label{dver}
A simplest deformation $A+{\cal D}(\vec
{\varepsilon})$ of a square matrix $A$
is \emph{versal} if there exists an
$n\times n$ matrix ${\cal S}(X)$ and a
neighborhood $U\subset \mathbb
C^{n\times n}$ of $0_n$ such that
\begin{itemize}
  \item[(i)] the entries of ${\cal
      S}(X)$ are power series in
      variables $x_{ij}$,
      $i,j=1,\dots,n$ (they form
      the $n\times n$ matrix of
      unknowns $X=[x_{ij}]$),
  \item[(ii)] these series are
      convergent in $U$ and ${\cal
      S}(0_n)=I_n,$
  \item[(iii)] ${\cal
      S}(E)^T(A+E){\cal S}(E)\in
      A+{\cal D}(\mathbb C)$ for
      all $E\in U$.
\end{itemize}
\end{definition}

Since each square matrix is congruent
to its canonical matrix, it suffices to
construct miniversal deformations of
canonical matrices \eqref{kus}. Their
miniversal deformations are given in
the following theorem, which is another
form of Theorem \ref{teo2}.

\begin{theorem}[{\cite{f_s}}]\label{teojy}
Let $A_{\text{can}}$ be a canonical
matrix \eqref{kus} for congruence. A
simplest miniversal deformation of
$A_{\text{can}}$ can be taken in the
form $A_{\text{can}} +{\cal D}(\vec
{\varepsilon})$, where $\cal D$ is the
$(0,\!*)$ matrix partitioned into
blocks ${\cal D}_{ij}$ $($as in
\eqref{grsd}$)$ that are defined by
\eqref{KEV}--\eqref{lsiu6} in the
notation \eqref{lhs}.
\end{theorem}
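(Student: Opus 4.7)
\medskip

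\textbf{Proof plan for Theorem \ref{teojy}.} The plan is to follow exactly the two-step strategy already announced in the paragraph before Example \ref{colh}: reduce miniversality to the linear-algebraic splitting \eqref{jyr} by means of Lemma \ref{t2.1}, and then check \eqref{jyr} one block at a time.

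The first step is purely general. By Lemma \ref{t2.1} (which encodes the standard miniversal-deformation machinery in Arnold's style), any $(0,\!*)$ matrix $\cal D$ whose star-space ${\cal D}(\mathbb C)$ satisfies
\[
\mathbb C^{n\times n} = T(A_{\text{can}}) \oplus {\cal D}(\mathbb C)
\]
automatically yields a versal deformation $A_{\text{can}} + {\cal D}(\vec\varepsilon)$, and it is miniversal iff the number of stars equals $\operatorname{codim} T(A_{\text{can}})$. Thus the entire content of Theorem \ref{teojy} is the verification that the particular $\cal D$ described by formulas \eqref{KEV}--\eqref{lsiu6} provides such a complement. Once \eqref{jyr} is established, both versality and minimality follow immediately because the star-count is forced to equal $\dim\mathbb C^{n\times n} - \dim T(A_{\text{can}})$.

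The second step reduces to a block calculation. Write $A_{\text{can}} = A_1\oplus\cdots\oplus A_t$ and partition any $C\in\mathbb C^{n\times n}$ conformably as $C = [C_{ij}]$. Using \eqref{eelie}, the $(i,j)$ block of a generic element of $T(A_{\text{can}})$ is
\[
(C^T A_{\text{can}} + A_{\text{can}} C)_{ij} = A_i C_{ij} + C_{ji}^T A_j .
\]
Hence the splitting \eqref{jyr} decouples into independent block problems: for each diagonal position $i=i$ one must show that the image of the map $C_{ii}\mapsto A_iC_{ii}+C_{ii}^TA_i$ has ${\cal D}(A_i)(\mathbb C)$ as a complement in the corresponding block space; and for each off-diagonal pair $i<j$ one must show that the image of the map $(C_{ij},C_{ji})\mapsto\bigl(A_iC_{ij}+C_{ji}^TA_j,\;A_jC_{ji}+C_{ij}^TA_i\bigr)$ has ${\cal D}(A_i,A_j)(\mathbb C)$ as a complement in the product of block spaces. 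The concrete task is therefore to verify, for each of the three block types $H_m(\lambda)$, $\Gamma_n$, $J_n(0)$ and each of the six pairings, that the specific $(0,\!*)$-pattern prescribed by \eqref{KEV}--\eqref{lsiu6} selects one element from each coset of the corresponding image. I would handle this by picking canonical representatives row-by-row (or anti-diagonal by anti-diagonal) of the block, exploiting the structural identity $A_iC + C^TA_j^T = $ something whose entries can be solved for inductively because of the near-triangular/Toeplitz shape of $J_m(\lambda)$, $H_m(\lambda)$, and $\Gamma_n$.

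The main obstacle is the combinatorial bookkeeping in the cases where the block types "resonate." The easy cases are $H_m(\lambda)\oplus H_n(\mu)$ with $\lambda\neq\mu^{\pm1}$, $\Gamma_m\oplus\Gamma_n$ with $m-n$ odd, and mixed pairs satisfying the parity incompatibility in \eqref{lsiu4}--\eqref{lsiu6}, where both components of ${\cal D}(A_i,A_j)$ vanish because the maps are already surjective. The hard cases are: (a) the diagonal blocks $H_m(\pm 1)$ and $\Gamma_n$, where the map $C\mapsto A_iC+C^TA_i$ has a nontrivial cokernel whose dimension depends sensitively on parities, forcing the zig-zag patterns $0^{\nwvdash}$ and $0^{\nwmodels}$; (b) the pair $(J_m(0),J_n(0))$ with $m\ge n$, $n$ odd, where nilpotency of $J_k(0)$ creates an extra-dimensional cokernel that is absorbed by the correction term ${\cal P}_{mn}$ in \eqref{lsiu3}; and (c) $H_m(\lambda)\oplus H_n(\lambda^{\pm 1})$, where the two off-diagonal blocks are genuinely coupled and the $2\times 2$ block pattern in \eqref{lsiu1} has to be checked by solving a linear system over the block entries. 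In each of these harder cases I would fix an ordering of the entries (say, by anti-diagonal index) so that the operator $C\mapsto A_iC+C^TA_j$ becomes triangular with explicit pivots, identify precisely which pivots vanish, and observe that the stars of ${\cal D}$ are placed exactly at those vanishing-pivot positions. A dimension count at the end --- matching the number of stars with the codimension computed independently in \cite{t_d} --- provides a consistency check that the proposed $\cal D$ is truly miniversal.
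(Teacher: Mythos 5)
Your plan coincides exactly with the paper's proof: Lemma \ref{t2.1} reduces miniversality to the direct-sum condition \eqref{jyr}, the block decoupling you describe is precisely Lemma \ref{thekd} (your formula $(C^TA_{\text{can}}+A_{\text{can}}C)_{ij}=A_iC_{ij}+C_{ji}^TA_j$ matches \eqref{djh}--\eqref{djhh}), and the skew-diagonal triangularization with stars placed at the vanishing pivots is how Sections 5--7 handle each of the three diagonal and six off-diagonal block types, including the resonant cases you single out ($\lambda=\pm1$, $\Gamma_n$, nilpotent pairs with $n$ odd, and the coupled $\lambda=\mu^{\pm1}$ blocks). The strategy is correct and identical to the paper's; what remains unexecuted in your plan is the case-by-case verification itself (the symmetric/skew-symmetric vector splittings and the explicit linear dependencies), which is where the bulk of the paper's work lies.
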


\begin{remark}\label{HRe}
Each square matrix $A$ can be
represented uniquely as
\begin{equation}\label{fsh}
A=\mathscr{S}+
\mathscr{C},\quad\text{$\mathscr{S}$ is
symmetric and $\mathscr{C}$ is
skew-symmetric.}
\end{equation}
A congruence of $A$ corresponds to a
simultaneous congruence of
$\mathscr{S}$  and $\mathscr{C}$. Thus,
if $A_{\text{can}}$ is a canonical
matrix for congruence given in Theorem
\ref{t2a} and
$A_{\text{can}}=\mathscr{S}_{\text{can}}+
\mathscr{C}_{\text{can}}$ is its
representation \eqref{fsh}, then
$(\mathscr{S}_{\text{can}},
\mathscr{C}_{\text{can}})$ is a
canonical pair for simultaneous
congruence of pairs of symmetric and
skew-symmetric matrices. The pairs
$(\mathscr{S}_{\text{can}},
\mathscr{C}_{\text{can}})$ were
described in \cite[Theorem
1.2(a)]{hor-ser_can}. Theorem
\ref{teojy} admits to derive a
miniversal deformation of
$(\mathscr{S}_{\text{can}},
\mathscr{C}_{\text{can}})$; that is, to
construct a normal form with minimal
number of parameters to which all pairs
$(\mathscr{S},\mathscr{C})$ that are
close to $(\mathscr{S}_{\text{can}},
\mathscr{C}_{\text{can}})$ and consist
of symmetric and skew-symmetric
matrices can be reduced by
transformations
\[
(\mathscr{S},\mathscr{C})\mapsto
(S^T\mathscr{S}S,\,S^T\mathscr{C}S),
\qquad S\text{ is nonsingular},
\]
in which $S$ smoothly depends on the
entries of $\mathscr{S}$ and
$\mathscr{C}$. All one has to do is to
express $A_{\text{can}} +{\cal D}(\vec
{\varepsilon})$ as the sum of symmetric
and skew-symmetric matrices.
\end{remark}

\section{A method for constructing
miniversal deformations} \label{sect3}

In this section, we give a method for
constructing simplest miniversal
deformations; we use it in the proof of
Theorem \ref{teojy}.

The deformation
\begin{equation}\label{edr} {\cal
U}(\vec {\varepsilon}):=
A+\sum_{i,j=1}^n
\varepsilon_{ij}E_{ij},
\end{equation} in which $E_{ij}$
are the matrix units,
is universal in the sense that every
deformation ${\cal
B}(\mu_1,\dots,\mu_l)$ of $A$ has the
form ${\cal U}(\vec{\varphi}
(\mu_1,\dots,\mu_l)),$ in which
$\varphi_{ij}(\mu_1,\dots,\mu_l)$ are
power series that are convergent in a
neighborhood of $\vec 0$ and
$\varphi_{ij}(\vec 0)= 0$. Hence every
deformation ${\cal
B}(\mu_1,\dots,\mu_l)$ in Definition
\ref{d} can be replaced by ${\cal
U}(\vec {\varepsilon})$, which gives
the following lemma.

\begin{lemma}\label{lem}
The following two conditions are
equivalent for any deformation ${\cal
A}(\lambda_1,\dots,\lambda_k)$ of a
matrix $A$:
\begin{itemize}
  \item[\rm(i)] The deformation
      ${\cal
      A}(\lambda_1,\dots,\lambda_k)$
      is versal.
  \item[\rm(ii)] The deformation
      \eqref{edr} is equivalent to
      ${\cal
      A}(\varphi_1(\vec{\varepsilon}),\dots,
      \varphi_k(\vec{\varepsilon}))$
      for some power series
      $\varphi_i(\vec{\varepsilon})$
      that are convergent in a
      neighborhood of\/ $\vec 0$
      and such that $\varphi_i(\vec
      0)=0$.
\end{itemize}
\end{lemma}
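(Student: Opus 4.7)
\medskip
\noindent\textbf{Proof plan.}

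The plan is to exploit the universal property of the deformation \eqref{edr} emphasized just before the lemma: every deformation $\mathcal{B}(\mu_1,\dots,\mu_l)$ of $A$ can be written as $\mathcal{U}(\vec{\psi}(\vec{\mu}))$ for some power series $\psi_{ij}$ with $\psi_{ij}(\vec{0})=0$ that converge near $\vec{0}$ (take $\psi_{ij}(\vec\mu):=\mathcal{B}(\vec\mu)_{ij}-A_{ij}$). Once this is in hand, the lemma becomes a formal manipulation of substitutions, and the two implications follow almost immediately from Definition~\ref{d}.

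For the direction (i)$\Rightarrow$(ii), I would just apply versality of $\mathcal{A}$ to the particular deformation $\mathcal{B}:=\mathcal{U}$: Definition~\ref{d} then furnishes power series $\varphi_1,\dots,\varphi_k$ with $\varphi_i(\vec{0})=0$ such that $\mathcal{U}(\vec{\varepsilon})$ is equivalent to $\mathcal{A}(\varphi_1(\vec{\varepsilon}),\dots,\varphi_k(\vec{\varepsilon}))$, which is exactly condition (ii). Nothing further is needed here.

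For (ii)$\Rightarrow$(i), let $\mathcal{B}(\vec\mu)$ be an arbitrary deformation of $A$. Write $\mathcal{B}(\vec\mu)=\mathcal{U}(\vec\psi(\vec\mu))$ as above. The hypothesis provides a deformation $\mathcal{I}(\vec\varepsilon)$ of $I_n$ with $\mathcal{I}(\vec\varepsilon)^T\mathcal{U}(\vec\varepsilon)\mathcal{I}(\vec\varepsilon)=\mathcal{A}(\varphi_1(\vec\varepsilon),\dots,\varphi_k(\vec\varepsilon))$ on a neighborhood of $\vec{0}$. Substituting $\vec\varepsilon=\vec\psi(\vec\mu)$ yields
\[
\mathcal{I}(\vec\psi(\vec\mu))^T\,\mathcal{B}(\vec\mu)\,\mathcal{I}(\vec\psi(\vec\mu))
=\mathcal{A}\bigl(\varphi_1(\vec\psi(\vec\mu)),\dots,\varphi_k(\vec\psi(\vec\mu))\bigr),
\]
so $\mathcal{B}$ is equivalent to $\mathcal{A}(\chi_1(\vec\mu),\dots,\chi_k(\vec\mu))$, where $\chi_i:=\varphi_i\circ\vec\psi$. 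Since $\vec\psi(\vec 0)=\vec 0$ and $\varphi_i(\vec 0)=0$, we have $\chi_i(\vec 0)=0$, so $\mathcal{A}$ is versal by Definition~\ref{d}.

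The only real issue to check carefully is that the substitution is legitimate: one must verify that the composite power series $\varphi_i\circ\vec\psi$ and the matrix $\mathcal{I}\circ\vec\psi$ are again convergent in some neighborhood of $\vec 0$, and that the matrix identity above holds on such a neighborhood rather than only formally. This is the sort of routine composition-of-convergent-power-series argument that works because $\vec\psi(\vec 0)=\vec 0$ maps small neighborhoods of $\vec 0$ into the domain of convergence of $\mathcal{I}$ and of the $\varphi_i$; I would record this as the single substantive step in an otherwise bookkeeping proof.
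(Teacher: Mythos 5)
Your proposal is correct and follows essentially the same route as the paper: the paper derives the lemma directly from the universality of the deformation \eqref{edr} (writing any $\mathcal{B}(\vec\mu)$ as $\mathcal{U}(\vec\psi(\vec\mu))$ with $\psi_{ij}(\vec\mu)=\mathcal{B}(\vec\mu)_{ij}-A_{ij}$ and substituting), merely stating this more tersely in the paragraph preceding the lemma. Your two implications and the remark about composing convergent power series are exactly the content the paper leaves implicit.
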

If $U$ is a subspace of a vector space
$V$, then each set $v+U$ with $v\in V$
is called an \emph{affine subspace
parallel to $U$}.

The proof of Theorem \ref{teojy} is
based on the following lemma, which
gives a method of constructing
miniversal deformations. A constructive
proof of this lemma is given in Theorem
\ref{ttft}.

\begin{lemma}
 \label{t2.1}
Let $A\in{\mathbb C}^{\,n\times n}$ and
let $\cal D$ be a $(0,\!*)$ matrix of
size $n\times n$. The following three
statements are equivalent:
\begin{itemize}
  \item[\rm(i)] The deformation
      $A+{\cal D}(\vec
      {\varepsilon})$ of $A\ ($see
      \eqref{a2z}$)$ is miniversal.

  \item[\rm(ii)] The vector space
      ${\mathbb C}^{\,n\times n}$
      decomposes into the direct
      sum
\begin{equation*}\label{a4}
{\mathbb C}^{\,n\times
n}=T(A)
\oplus {\cal
D}({\mathbb C})
\end{equation*}
in which $T(A)$ and ${\cal
D}({\mathbb C})$ are defined in
\eqref{eelie} and \eqref{a2z}.

  \item[\rm(iii)] Each affine
      subspace of ${\mathbb
      C}^{\,n\times n}$ parallel to
      $T(A)$ intersects ${\cal
      D}(\mathbb C)$ at exactly one
      point.
\end{itemize}
\end{lemma}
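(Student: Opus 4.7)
My plan is to dispose of (ii)$\Leftrightarrow$(iii) by pure linear algebra and to treat the substantive equivalence (i)$\Leftrightarrow$(ii) via the holomorphic inverse function theorem in one direction and differentiation of the equivalence relation in the other.

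The equivalence (ii)$\Leftrightarrow$(iii) is immediate from the following linear-algebra fact: for subspaces $U,W$ of a finite-dimensional vector space $V$, one has $V=U\oplus W$ if and only if the composition $W\hookrightarrow V\twoheadrightarrow V/U$ is a bijection, which in turn says precisely that every coset parallel to $U$ meets $W$ in exactly one point.

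For (ii)$\Rightarrow$(i), I would consider the holomorphic map
\[
\Phi\colon\mathbb{C}^{n\times n}\times\mathcal{D}(\mathbb{C})\to\mathbb{C}^{n\times n},\qquad \Phi(C,M):=(I_n+C)^T(A+M)(I_n+C).
\]
By \eqref{fwt}, its derivative at $(0,0)$ sends $(C,M)$ to $C^TA+AC+M$, whose image is $T(A)+\mathcal{D}(\mathbb{C})=\mathbb{C}^{n\times n}$ by (ii). Restricting $C$ to any complex-linear complement of the kernel of $C\mapsto C^TA+AC$ makes this derivative a $\mathbb{C}$-linear isomorphism, so the holomorphic inverse function theorem yields holomorphic $C(E)$ and $\vec{\varepsilon}(E)$ vanishing at $E=0$ with $\Phi(C(E),\mathcal{D}(\vec{\varepsilon}(E)))=A+E$. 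Setting $\mathcal{S}(E):=(I_n+C(E))^{-1}$, which is nonsingular near $0$ since $\mathcal{S}(0)=I_n$, and rearranging gives $\mathcal{S}(E)^T(A+E)\mathcal{S}(E)\in A+\mathcal{D}(\mathbb{C})$, so $A+\mathcal{D}(\vec{\varepsilon})$ is versal by Definition \ref{dver} and Lemma \ref{lem}. For miniversality, differentiating at $\vec{0}$ the equivalence $\mathcal{U}(\vec{\varepsilon})\sim\mathcal{A}(\vec{\varphi}(\vec{\varepsilon}))$ afforded by Lemma \ref{lem} for any versal $\mathcal{A}(\lambda_1,\dots,\lambda_k)$ forces $T(A)+\mathrm{span}\{\partial_{\lambda_i}\mathcal{A}(\vec 0)\}=\mathbb{C}^{n\times n}$, yielding the lower bound $k\ge n^2-\dim T(A)=\dim\mathcal{D}(\mathbb{C})$.

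For (i)$\Rightarrow$(ii), the same differentiation applied to the equivalence guaranteed by versality of $A+\mathcal{D}(\vec{\varepsilon})$ shows $T(A)+\mathcal{D}(\mathbb{C})=\mathbb{C}^{n\times n}$; if this sum were not direct then $\dim\mathcal{D}(\mathbb{C})>\mathrm{codim}\,T(A)$, but applying the already-proven (ii)$\Rightarrow$(i) to any linear parametrization of a complement of $T(A)$ produces a versal deformation with only $\mathrm{codim}\,T(A)$ parameters, contradicting miniversality. The main obstacle is the (ii)$\Rightarrow$(i) step: extracting \emph{holomorphic} (not merely smooth) dependence of $\mathcal{S}(E)$ and $\vec{\varepsilon}(E)$ on the entries of $E$ demands the holomorphic inverse function theorem, so one must take care that the chosen complement of the kernel of $C\mapsto C^TA+AC$ is a \emph{complex} subspace, ensuring the linearized equation remains $\mathbb{C}$-linear. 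A constructive implementation of this inverse—producing explicit formulas for $\mathcal{S}(E)$—is precisely what the authors defer to Theorem \ref{ttft} of Section \ref{sect4}.
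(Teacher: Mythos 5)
Your proof is correct, but it is organized differently from the paper's. The paper's proof of Lemma \ref{t2.1} is essentially a two-line appeal to the general theory: it identifies $T(A)$ as the tangent space to the congruence orbit $A^{GL_n}$ at $A$ via \eqref{fwt} and then cites \cite[Section 1.6]{arn2} and \cite[Part V, Theorem 1.2]{tan} for the fact that a transversal of minimal dimension to the orbit is a miniversal deformation; the equivalence of (ii) and (iii) is declared obvious, exactly as in your first paragraph. What you do differently is actually prove the transversality theorem in this setting: the holomorphic inverse function theorem applied to $\Phi(C,M)=(I_n+C)^T(A+M)(I_n+C)$ gives (ii)$\Rightarrow$(versality), and differentiating the equivalence supplied by Lemma \ref{lem} gives both the lower bound $k\ge\operatorname{codim}T(A)$ on the number of parameters and the implication (i)$\Rightarrow$(ii). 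All the steps check out: the restricted derivative $(C,M)\mapsto C^TA+AC+M$ on $W\oplus{\cal D}(\mathbb C)$ is injective precisely because the sum in (ii) is direct and $W$ complements the kernel of $C\mapsto C^TA+AC$, and surjective by (ii), so it is an isomorphism by dimension count; and your competitor deformation in (i)$\Rightarrow$(ii) (a linear parametrization of any complement of $T(A)$) is versal by the same argument, which is what the contradiction with minimality requires. It is worth noting that the paper also contains a second, constructive proof of the versality half of (ii)$\Rightarrow$(i) in Theorem \ref{ttft} of the Appendix, via an iteratively defined infinite product ${\cal S}(E)=(I_n+C_1)(I_n+C_2)\cdots$ with explicit norm estimates; your inverse-function-theorem route is shorter and fully rigorous but non-constructive, whereas the Appendix argument produces an explicit reducing transformation. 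Your concern about choosing a \emph{complex} complement is automatically satisfied here, since $C\mapsto C^TA+AC$ involves no conjugation and is $\mathbb C$-linear.
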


\begin{proof} Let us define the action
of the group
$GL_n(\mathbb C)$ of nonsingular
$n\times n$ matrices on the space
${\mathbb C}^{\,n\times n}$ by
\begin{equation}\label{hhl}
A^S:=S^T AS,\qquad
A\in{\mathbb
C}^{\,n\times n},\
S\in GL_n(\mathbb C).
\end{equation}
The orbit $A^{GL_n}$ of $A$ under this
action consists of all matrices that
are congruent to $A$.

By \eqref{fwt}, the space $T(A)$ is the
tangent space to the orbit $A^{GL_n}$
at the point $A$. Hence ${\cal D}(\vec
{\varepsilon})$ is transversal to the
orbit $A^{GL_n}$ at the point $A$ if
\[
{\mathbb C}^{\,n\times
n}=T(A) +
{\cal D}({\mathbb C})
\]
(see definitions in
\cite[\S\,29E]{arn3}; two subspaces of
a vector space are called
\emph{transversal} if their sum is the
whole space).

This proves the equivalence of (i) and
(ii) since a transversal (of the
minimal dimension) to the orbit is a
(mini)versal deformation; see
\cite[Section 1.6]{arn2} or \cite[Part
V, Theorem 1.2]{tan}. The equivalence
of (ii) and (iii) is obvious.
\end{proof}

Recall that the orbits of canonical
matrices \eqref{kus} under the action
\eqref{hhl} were also studied in
\cite{t_d,f_s_3x3}.

\begin{corollary}
A simplest miniversal deformation of
$A\in{\mathbb C}^{\,n\times n}$ can be
constructed as follows. Let
$T_1,\dots,T_r$ be a basis of the space
$T(A)$, and let $E_1,\dots,E_{n^2}$ be
the basis of ${\mathbb C}^{\,n\times
n}$ consisting of all matrix units
$E_{ij}$. Removing from the sequence
$T_1,\dots, T_r,E_1,\dots,E_{n^2}$
every matrix that is a linear
combination of the preceding matrices,
we obtain a new basis $T_1,\dots, T_r,
E_{i_1},\dots,E_{i_k}$ of the space
${\mathbb C}^{\,n\times n}$. By Lemma
\ref{t2.1}, the deformation
\[
{\cal
A}(\varepsilon_1,\dots,
\varepsilon_k)=
A+\varepsilon_1
E_{i_1}+\dots+\varepsilon_kE_{i_k}
\]
is miniversal.
\end{corollary}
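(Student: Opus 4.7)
My plan is to verify condition (ii) of Lemma \ref{t2.1} for the $(0,\!*)$ matrix $\cal D$ whose stars sit precisely at the positions of the surviving matrix units $E_{i_1},\dots,E_{i_k}$, and then read off miniversality from the implication (ii)$\Rightarrow$(i) of that lemma.

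First I would observe that the greedy reduction procedure described in the statement is nothing other than the standard Steinitz exchange / basis-extension algorithm applied to the concatenated list $T_1,\dots,T_r,E_1,\dots,E_{n^2}$. Because $T_1,\dots,T_r$ is already linearly independent (being a basis of $T(A)$), none of these vectors are discarded. Among the matrix units, the rule keeps $E_j$ precisely when it is not a linear combination of the previously kept elements, and discards $E_j$ precisely when it already lies in the span of those kept so far. Consequently the surviving list $T_1,\dots,T_r,E_{i_1},\dots,E_{i_k}$ is linearly independent (by the selection rule) and spans ${\mathbb C}^{\,n\times n}$ (since every original $E_j$ lies in its span, and the $E_j$'s span ${\mathbb C}^{\,n\times n}$). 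Hence it is a basis, and in particular $k=n^2-r$.

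Next I would translate this basis statement into the direct sum decomposition required by Lemma \ref{t2.1}(ii). Writing ${\cal I}({\cal D})$ for the set of indices of the surviving units, so that by \eqref{a2z} one has
\[
{\cal D}({\mathbb C})=\bigoplus_{j=1}^k{\mathbb C}E_{i_j}=\operatorname{span}(E_{i_1},\dots,E_{i_k}),
\]
the fact that the union of the basis $T_1,\dots,T_r$ of $T(A)$ and the basis $E_{i_1},\dots,E_{i_k}$ of ${\cal D}({\mathbb C})$ is a basis of ${\mathbb C}^{\,n\times n}$ is exactly the direct-sum decomposition
\[
{\mathbb C}^{\,n\times n}=T(A)\oplus{\cal D}({\mathbb C}).
\]
The implication (ii)$\Rightarrow$(i) of Lemma \ref{t2.1} then yields that $A+{\cal D}(\vec\varepsilon)=A+\varepsilon_1 E_{i_1}+\dots+\varepsilon_k E_{i_k}$ is a simplest miniversal deformation of $A$, which is precisely the conclusion of the corollary.

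The main obstacle is very mild and is purely bookkeeping: one must check that the greedy rule really produces a basis. Linear independence is immediate from the rule, while the spanning property requires the observation that any discarded $E_j$ is, by definition of the rule, a linear combination of the vectors kept before it, and hence lies in the span of the full surviving list. Once that is recorded, the corollary reduces to a one-line invocation of Lemma \ref{t2.1}; no further analytic or combinatorial work is needed.
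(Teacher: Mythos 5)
Your proposal is correct and follows exactly the route the paper intends: the greedy selection yields a basis $T_1,\dots,T_r,E_{i_1},\dots,E_{i_k}$ of ${\mathbb C}^{\,n\times n}$, which is precisely the decomposition ${\mathbb C}^{\,n\times n}=T(A)\oplus{\cal D}({\mathbb C})$ required by Lemma \ref{t2.1}(ii), and miniversality follows from (ii)$\Rightarrow$(i). The paper treats this as immediate and gives no separate proof, so your write-up simply supplies the routine bookkeeping it leaves implicit.
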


For each $M\in {\mathbb C}^{m\times m}$
and $N\in {\mathbb C}^{n\times n}$,
define the vector space
\begin{equation}\label{neh1}
 T(M,N):=\{( \underbrace{S^TM+NR}_{\text{$n$-by-$m$}},\:
\underbrace{R^TN+MS}_{\text{$m$-by-$n$}})\,|\,S\in
 {\mathbb C}^{m\times n},\ R\in
 {\mathbb C}^{n\times m}\}.
\end{equation}

\begin{lemma}\label{thekd}
Let $A=A_1\oplus\dots\oplus A_t$ be a
block-diagonal matrix in which every
$A_i$ is $n_i\times n_i$. Let ${\cal
D}=[{\cal D}_{ij}]$ be a $(0,\!*)$
matrix of the same size and partitioned
into blocks conformably to the
partition of $A$. Then $A+{\cal D}(\vec
{\varepsilon})$ is a simplest
miniversal deformation of $A$ for
congruence if and only if
\begin{itemize}
  \item[\rm(i)] each affine
      subspace of ${\mathbb
      C}^{n_i\times n_i}$ parallel
      to $T(A_i)$ $($which is
      defined in
       \eqref{eelie}$)$ intersects
       ${\cal D}_{ii}(\mathbb C)$
       at exactly one point,
      and

  \item[\rm(ii)] each affine
      subspace of ${\mathbb
      C}^{n_j\times n_i}\oplus
      {\mathbb C}^{n_i\times n_j}$
      parallel to $T(A_i,A_j)$
      $($which is defined in
       \eqref{neh1}$)$
      intersects ${\cal
      D}_{ji}(\mathbb C)\oplus{\cal
      D}_{ij}(\mathbb C)$ at
      exactly one point.
\end{itemize}
\end{lemma}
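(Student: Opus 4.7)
The plan is to apply Lemma~\ref{t2.1} at two levels, reducing the global transversality condition ${\mathbb C}^{n\times n}=T(A)\oplus{\cal D}({\mathbb C})$ to separate transversality conditions on each block position.

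The first step is to analyse $T(A)$ blockwise. Partitioning $C\in{\mathbb C}^{n\times n}$ conformably to $A=A_1\oplus\dots\oplus A_t$, the block-diagonality of $A$ yields
\begin{equation*}
(C^TA+AC)_{ij}=C_{ji}^TA_j+A_iC_{ij}
\end{equation*}
for all $i,j$. The $(i,i)$-block ranges over $T(A_i)$ as $C_{ii}$ varies freely, while for $i<j$ the ordered pair consisting of the $(j,i)$-block and the $(i,j)$-block equals $(C_{ij}^TA_i+A_jC_{ji},\,C_{ji}^TA_j+A_iC_{ij})$, which matches the definition of $T(A_i,A_j)$ in \eqref{neh1} upon setting $M=A_i$, $N=A_j$, $S=C_{ij}$, $R=C_{ji}$. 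Because the blocks $C_{ij}$ are independent parameters, this shows that under the natural block-support identifications $T(A)$ is the internal direct sum of the subspaces $T(A_i)$ (at block positions $(i,i)$) and $T(A_i,A_j)$ (at block positions $(j,i)$ and $(i,j)$ jointly, for $i<j$).

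The second step is to note that ${\cal D}({\mathbb C})$, being a coordinate subspace of ${\mathbb C}^{n\times n}$, and the ambient space ${\mathbb C}^{n\times n}$ itself both split along the very same block positions. Hence the global identity ${\mathbb C}^{n\times n}=T(A)\oplus{\cal D}({\mathbb C})$ holds if and only if it holds separately at each block position, namely
\begin{equation*}
{\mathbb C}^{n_i\times n_i}=T(A_i)\oplus{\cal D}_{ii}({\mathbb C})\quad\text{and}\quad{\mathbb C}^{n_j\times n_i}\oplus{\mathbb C}^{n_i\times n_j}=T(A_i,A_j)\oplus\bigl({\cal D}_{ji}({\mathbb C})\oplus{\cal D}_{ij}({\mathbb C})\bigr).
\end{equation*}

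The third step is the final translation. By the equivalence of (i) and (ii) in Lemma~\ref{t2.1} applied to $A$, miniversality of $A+{\cal D}(\vec\varepsilon)$ is equivalent to the global direct-sum identity. The equivalence of (ii) and (iii) there is the general linear-algebra fact that a direct-sum decomposition $V=U_1\oplus U_2$ is the same as the statement that every affine subspace of $V$ parallel to $U_1$ meets $U_2$ at exactly one point; applying this fact on each block position converts the two block-level direct-sum identities into conditions (i) and (ii) of the present lemma. There is no serious obstacle; the only genuine content is the blockwise computation of $C^TA+AC$ and the bookkeeping needed to align its off-diagonal contribution with the definition \eqref{neh1} of $T(A_i,A_j)$.
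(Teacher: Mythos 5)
Your proof is correct and follows essentially the same route as the paper: the whole content is the blockwise computation of $C^TA+AC$ for block-diagonal $A$, identifying the diagonal blocks with $T(A_i)$ and the off-diagonal pairs with $T(A_i,A_j)$ as in \eqref{neh1}. The only cosmetic difference is that you phrase the decomposition through the direct-sum criterion of Lemma~\ref{t2.1}(ii) and then convert to affine intersections, whereas the paper works directly with the affine formulation of Lemma~\ref{t2.1}(iii); these are interchangeable.
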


\begin{proof}
By Lemma \ref{t2.1}(iii), $A+{\cal
D}(\vec {\varepsilon})$ is a simplest
miniversal deformation of $A$ if and
only if for each $C\in{\mathbb
C}^{n\times n}$ the affine subspace
$C+T(A)$ contains exactly one
$D\in{\cal D}(\mathbb C)$; that is, for
each $C$ exactly one matrix in ${\cal
D}(\mathbb C)$ has the form
\begin{equation}\label{kid}
D=C+S^TA+AS\in{\cal
D}(\mathbb C),\qquad
S\in{\mathbb
C}^{n\times n}.
\end{equation}
Let us partition $D,\ C$, and $S$ into
blocks conformably to the partition of
$A$. By \eqref{kid}, for each $i$ we
have $D_{ii}=C_{ii}+S_{ii}^TA_{i}
+A_{i}S_{ii}$, and for all $i$ and $j$
such that $i<j$ we have
\begin{equation*}\label{mht}
\begin{bmatrix}
D_{ii}&D_{ij}
 \\ D_{ji}&D_{jj}
\end{bmatrix}
=
\begin{bmatrix}
C_{ii}&C_{ij}
 \\ C_{ji}&C_{jj}
\end{bmatrix}
+ \begin{bmatrix}
S_{ii}^T&S_{ji}^T
 \\ S_{ij}^T&S_{jj}^T
\end{bmatrix}
\begin{bmatrix}
A_i&0
 \\ 0& A_j
\end{bmatrix}
+
\begin{bmatrix}
A_i&0
 \\ 0& A_j
\end{bmatrix}
\begin{bmatrix}
S_{ii}&S_{ij}
 \\ S_{ji}&S_{jj}
\end{bmatrix}.
\end{equation*}
Thus, \eqref{kid} is equivalent to the
conditions
\begin{equation}\label{djh}
D_{ii}=C_{ii}
+S_{ii}^TA_i+A_iS_{ii}\in{\cal
D}_{ii}(\mathbb
C)\quad\text{for }1\le i\le t
\end{equation}
and
\begin{equation}\label{djhh}
(D_{ji},D_{ij})= (C_{ji},C_{ij}) +(S_{ij}^TA_i+A_jS_{ji},\:
S_{ji}^TA_j+A_iS_{ij}) \in {\cal
D}_{ji}(\mathbb C)\oplus {\cal
D}_{ij}(\mathbb C)
\end{equation}
for $1\le i<j\le t$. Hence, for each
$C\in{\mathbb C}^{n\times n}$ there
exists exactly one $D\in{\cal D}$ of
the form \eqref{kid} if and only if
\begin{itemize}
  \item[(i$'$)] for each
      $C_{ii}\in{\mathbb
      C}^{n_i\times n_i}$ there
      exists exactly one
      $D_{ii}\in{\cal D}_{ii}$ of
      the form \eqref{djh}, and
  \item[(ii$'$)] for each $(
      C_{ji},C_{ij})\in{\mathbb
      C}^{n_j\times
      n_i}\oplus{\mathbb
      C}^{n_i\times n_j} $ there
      exists exactly one
      $(D_{ji},D_{ij})\in {\cal
      D}_{ji}(\mathbb C)\oplus{\cal
      D}_{ij}(\mathbb C) $ of the
      form \eqref{djhh}.
\end{itemize}
\end{proof}

\begin{corollary}\label{the}
In the notation of Lemma \ref{thekd},
$A+{\cal D}(\vec {\varepsilon})$ is a
miniversal deformation of $A$ if and
only if each submatrix of $A+{\cal
D}(\vec {\varepsilon})$ of the form
\begin{equation*}\label{a8}
\begin{bmatrix}
  A_i+{\cal D}_{ii}(\vec
{\varepsilon}) &
  {\cal D}_{ij}(\vec
{\varepsilon})\\
  {\cal D}_{ji}(\vec
{\varepsilon}) &A_j+
{\cal D}_{jj}(\vec
{\varepsilon})
\end{bmatrix}\quad \text{with }i<j
\end{equation*}
is a miniversal deformation of
$A_i\oplus A_j$. A similar reduction to
the case of canonical forms for
congruence with two direct summands was
used in \cite{t_d} for the solution of
the equation $XA + AX^T  = 0$.
\end{corollary}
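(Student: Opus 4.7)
The plan is to derive the corollary as a direct consequence of Lemma \ref{thekd}, applied twice: once to the full block decomposition $A = A_1 \oplus \cdots \oplus A_t$, and once to each two-block sum $A_i \oplus A_j$ with $i<j$. The key structural observation is that the conditions (i) and (ii) of Lemma \ref{thekd} are entirely \emph{local}: they involve only the single diagonal block ${\cal D}_{ii}$, or only the pair of off-diagonal blocks $({\cal D}_{ji}, {\cal D}_{ij})$ together with $T(A_i,A_j)$, which in turn depends only on $A_i$ and $A_j$. Thus miniversality decouples across diagonal blocks and across pairs of blocks.

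More precisely, applying Lemma \ref{thekd} to $A$ with its full partition, the deformation $A + {\cal D}(\vec{\varepsilon})$ is miniversal if and only if condition (i) holds for each $i \in \{1,\dots,t\}$ and condition (ii) holds for each pair $i<j$ in $\{1,\dots,t\}$. Applying Lemma \ref{thekd} instead to the two-summand matrix $A_i \oplus A_j$ (for a fixed pair $i<j$) with the corresponding $(0,*)$ matrix $\begin{bmatrix} {\cal D}_{ii} & {\cal D}_{ij} \\ {\cal D}_{ji} & {\cal D}_{jj} \end{bmatrix}$, the displayed $2\times 2$ block submatrix is a miniversal deformation of $A_i \oplus A_j$ if and only if condition (i) holds for the indices $i$ and $j$, and condition (ii) holds for the pair $(i,j)$. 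Since the spaces $T(A_i)$, $T(A_j)$, and $T(A_i,A_j)$ depend only on $A_i$ and $A_j$, these are literally the same conditions as in the previous sentence.

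Hence, if $A+{\cal D}(\vec{\varepsilon})$ is a miniversal deformation of $A$, then conditions (i) and (ii) of Lemma \ref{thekd} hold for each fixed pair $i<j$, so the displayed $2\times 2$ block submatrix is miniversal for $A_i \oplus A_j$. Conversely, if every such $2\times 2$ block submatrix is miniversal, then for every $i$ condition (i) holds (take any $j \ne i$) and for every $i<j$ condition (ii) holds, so by Lemma \ref{thekd} the full deformation $A+{\cal D}(\vec{\varepsilon})$ is miniversal. Since the argument is a straight bookkeeping reduction to Lemma \ref{thekd}, there is no substantive obstacle; the only point worth verifying carefully is that the spaces $T(A_i)$ and $T(A_i,A_j)$ in \eqref{eelie} and \eqref{neh1} depend only on the blocks $A_i$, $A_j$ and not on the rest of the decomposition, which is immediate from the definitions.
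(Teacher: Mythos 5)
Your proof is correct and follows the same route the paper intends: the corollary is stated without proof precisely because it is the observation you make, namely that conditions (i) and (ii) of Lemma \ref{thekd} are local to single blocks and to pairs of blocks, so applying the lemma to $A$ and to each $A_i\oplus A_j$ yields identical conditions. (The only implicit assumption, harmless here, is $t\ge 2$ so that every index $i$ occurs in some pair.)
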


We are ready to prove Theorem
\ref{teojy}. Let $A_{\rm can}=A_1\oplus
A_2\oplus\cdots\oplus A_t$ be the
canonical matrix \eqref{kus}, and let
${\cal D}=[{\cal D}_{ij}]_{i,j=1}^t$ be
the $(0,\!*)$ matrix constructed in
Theorem \ref{teojy}. Each $A_i$ has the
form $H_n(\lambda)$, or $\Gamma_n$, or
$ J_n(0)$, and so there are 3 types of
diagonal blocks ${\cal D}(A_i)={\cal
D}_{ii}$ and 6 types of pairs of
off-diagonal blocks ${\cal
D}(A_i,A_j)=({\cal D}_{ji},{\cal
D}_{ij})$, $i<j$; they were defined in
\eqref{KEV}--\eqref{lsiu6}. In the next
3 sections, we prove that all blocks of
$\cal D$ satisfy the conditions (i) and
(ii) of Lemma \ref{thekd}.

\section{Diagonal blocks of $\cal D$}

Let us verify that the diagonal blocks
of $\cal D$ defined in part (i) of
Theorem \ref{teo2} satisfy the
condition (i) of Lemma \ref{thekd}.

\subsection{Diagonal blocks ${\cal
D}(H_{n}(\lambda))$} \label{sub2}

Due to Lemma \ref{thekd}(i), it
suffices to prove that each
$2n$-by-$2n$ matrix
$A=[A_{ij}]_{i,j=1}^2$ can be reduced
to exactly one matrix of the form
\eqref{KEV} by adding
\begin{multline*}\label{moh}
\begin{bmatrix}
S_{11}^T&S_{21}^T
 \\ S_{12}^T&S_{22}^T
\end{bmatrix}
\begin{bmatrix}
0&I_n
 \\ J_n(\lambda)&0
\end{bmatrix}
+\begin{bmatrix} 0&I_n
 \\ J_n(\lambda)&0
\end{bmatrix}
\begin{bmatrix}
S_{11}&S_{12}
 \\ S_{21}&S_{22}
\end{bmatrix}
    \\=
\begin{bmatrix}
S_{21}^T J_n(\lambda)+S_{21}&
S_{11}^T+S_{22}\\
S_{22}^T J_n(\lambda)+J_n(\lambda)
S_{11}& S_{12}^T+J_n(\lambda)S_{12}
\end{bmatrix}
\end{multline*}
in which $S=[S_{ij}]_{i,j=1}^2$ is an
arbitrary $2n$-by-$2n$ matrix. Taking
$S_{22}=-A_{12}$ and the other
$S_{ij}=0$, we obtain a new matrix $A$
with $A_{12}=0$. To preserve $A_{12}$,
we hereafter must take $S$ with
$S_{11}^T+S_{22}=0$. Therefore, we can
add $S_{21}^T J_n(\lambda)+S_{21}$ to
the (new) $A_{11}$,
$S_{12}^T+J_n(\lambda)S_{12}$ to
$A_{22}$, and $-S_{11}
J_n(\lambda)+J_n(\lambda) S_{11}$ to
$A_{21}$. Using these additions, we can
reduce $A$ to the form \eqref{KEV} on
the strength of  the following 3
lemmas.

\begin{lemma}\label{lem2aaaa}
Adding $SJ_n(\lambda)+S^T$ with a fixed
$\lambda $ and an arbitrary $S$, we can
reduce each $n\times n$ matrix to
exactly one matrix of the form
\begin{equation}\label{kdi}
\begin{cases}
    0 & \text{if $\lambda\ne
\pm 1$}, \\
   0^{\nwmodels} &
\text{if $\lambda=
1$},
           \\
   0^{\nwvdash} &
\text{if $\lambda=
-1$}.
  \end{cases}
\end{equation}
\end{lemma}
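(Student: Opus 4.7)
The plan is to interpret the lemma as the direct sum decomposition $\mathbb{C}^{n\times n} = V_\lambda \oplus \mathrm{Im}(\phi)$, where $\phi(S) := SJ_n(\lambda) + S^T$ and $V_\lambda$ is the subspace of $n\times n$ matrices with the $(0,\!*)$-pattern specified in \eqref{kdi}. This decomposition is equivalent to the existence-and-uniqueness statement of the lemma, and I would treat the three cases of $\lambda$ separately.

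For $\lambda \ne \pm 1$, the goal is to show that $\phi$ is bijective, so that $V_\lambda = \{0\}$ is the correct complement. Substituting $S^T = B - SJ_n(\lambda)$ into the transposed equation $S = B^T - J_n(\lambda)^T S^T$ converts $\phi(S) = B$ into the Stein equation
\[
S - J_n(\lambda)^T S J_n(\lambda) = B^T - J_n(\lambda)^T B.
\]
Since $J_n(\lambda)$ has only $\lambda$ as an eigenvalue and $\lambda^2 \ne 1$, standard Sylvester/Stein theory makes the operator $\Sigma\colon S \mapsto S - J_n(\lambda)^T S J_n(\lambda)$ invertible. For the resulting unique $S$, a short computation shows that the residual $R := \phi(S) - B$ satisfies $R = J_n(\lambda)^T R J_n(\lambda)$, hence $R \in \ker \Sigma = \{0\}$, giving $\phi(S) = B$. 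Both surjectivity and injectivity of $\phi$ follow.

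For $\lambda = \pm 1$, $\Sigma$ has nontrivial kernel, so the above shortcut fails; here the plan is to establish the direct sum decomposition by combining a dimension count with an independence check. First, I would compute $\dim \ker \phi$ by solving the system $\lambda S_{ij} + S_{i,j-1} + S_{ji} = 0$ (with the convention $S_{i,0} = 0$) entry by entry: pairing the equations for $(i,j)$ and $(j,i)$ and subtracting yields propagating identifications such as $S_{i,j-1} = S_{j,i-1}$ in the $\lambda=1$ case, which cascade inward from the boundary $S_{\cdot,0} = 0$ and leave exactly $\lfloor n/2 \rfloor$ free parameters for $\lambda = 1$ and $\lceil n/2 \rceil$ for $\lambda = -1$, matching the number of stars in $V_\lambda$. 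Second, I would verify $V_\lambda \cap \mathrm{Im}(\phi) = \{0\}$: if $B \in V_\lambda$ were $\phi(S)$ for some $S$, the equations $\phi(S)_{ij} = 0$ at the non-star positions would propagate similarly and force the star entries to vanish as well. Together with the rank--nullity identity, this yields $\mathbb{C}^{n\times n} = V_\lambda \oplus \mathrm{Im}(\phi)$.

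The main obstacle lies in the combinatorial bookkeeping when $\lambda = \pm 1$: the chain of identifications rooted in the convention $S_{\cdot,0} = 0$ cascades through all entries of $S$, and one must track carefully that exactly the claimed number of free parameters survive. The natural organization is by the anti-diagonals $i+j = \mathrm{const}$, working outward from small values so that each new identification involves only entries already analyzed; the same ordering should make the independence argument for $V_\lambda \cap \mathrm{Im}(\phi) = \{0\}$ straightforward.
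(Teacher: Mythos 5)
Your proposal is correct in substance but follows a genuinely different route from the paper. You recast the lemma as the direct-sum decomposition $\mathbb{C}^{n\times n}=V_\lambda\oplus\operatorname{Im}\phi$ (a legitimate restatement of ``reduce to exactly one matrix of the given form''), and for $\lambda\ne\pm1$ you settle it by converting $\phi(S)=B$ into the Stein equation $S-J_n(\lambda)^TSJ_n(\lambda)=B^T-J_n(\lambda)^TB$, whose operator is invertible since its eigenvalues are all $1-\lambda^2$; your residual argument ($R^T=J_n(\lambda)^TR$, hence $R=J_n(\lambda)^TRJ_n(\lambda)=0$) correctly closes the loop, and injectivity of $\phi$ follows the same way. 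The paper instead reduces $A$ skew diagonal by skew diagonal: for $\lambda\ne\pm1$ each skew diagonal is killed because the system $\lambda x_i+x_{r+1-i}=c_i$ is nonsingular, and for $\lambda=\pm1$ it writes $S=B+C$ (symmetric plus skew-symmetric) and shows each skew diagonal of $SJ_n(\pm1)+S^T$ is a symmetric vector plus a shifted skew-symmetric one, which pins down exactly the patterns $0^{\nwmodels}$ and $0^{\nwvdash}$. What your route buys is a slicker $\lambda\ne\pm1$ case and a clean conceptual frame; what it costs is that for $\lambda=\pm1$ the entire content is pushed into the two deferred computations (that $\dim\ker\phi$ equals $\lfloor n/2\rfloor$ resp.\ $\lceil n/2\rceil$, and that $V_\lambda\cap\operatorname{Im}\phi=0$), which are exactly as laborious as the paper's skew-diagonal bookkeeping. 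Your claimed kernel dimensions are correct (I checked $n\le 3$ for both signs), but be aware that pairing and subtracting the $(i,j)$ and $(j,i)$ equations discards information --- e.g.\ for $\lambda=1$ the diagonal equations $2s_{ii}+s_{i,i-1}=0$ are not differences of a pair --- so the count must be run on the full system, organized along anti-diagonals as you propose; the same caveat applies to the intersection check. The paper's proof has the additional merit of being constructive (it exhibits the reducing $S$), which matches the algorithmic spirit of its Appendix, though Lemma 4.2 shows the existence-plus-uniqueness statement alone suffices for Theorem 3.1.
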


\begin{proof}
Let $A=[a_{ij}]$ be an arbitrary
$n\times n$ matrix. We will reduce it
along its \emph{skew diagonals}
\[
\begin{tikzpicture}\small
\matrix (magic) [matrix of math nodes]
  {%
  a_{11} & a_{12} & a_{13} & \cdots & a_{1n} \\
  a_{21} & a_{22} & a_{23} & \cdots & a_{2n} \\
  a_{31} & a_{32} & a_{33} &\cdots & a_{3n} \\
  \cdots\  & \cdots & \cdots & \cdots & \ \cdots  \\
  a_{n1} & a_{n2} & a_{n3} & \cdots & a_{nn} \\  };
   \draw[help lines] (magic-1-1.south west) -- (magic-1-1.north east);
\draw[help lines] (magic-2-1.south
west) -- (magic-1-2.north east);
\draw[help lines] (magic-3-1.south
west) -- (magic-1-3.north east);
\draw[help lines] (magic-4-1.south
west) -- (magic-1-4.north east);
\draw[help lines] (magic-5-1.south
west) -- (magic-1-5.north east);
\draw[help lines] (magic-5-2.south
west) -- (magic-2-5.north east);
\draw[help lines] (magic-5-3.south
west) -- (magic-3-5.north east);
\draw[help lines] (magic-5-4.south
west) -- (magic-4-5.north east);
\draw[help lines] (magic-5-5.south
west) -- (magic-5-5.north east);
\end{tikzpicture}
\]
starting from the upper left corner;
that is, in the following order:
\begin{equation}\label{ly}
a_{11},\
(a_{21},a_{12}),\
(a_{31},a_{22},a_{13}),\
\dots,\ a_{nn}.
\end{equation}
We reduce $A$ by adding $\Delta
A:=SJ_n(\lambda)+S^T$ in which
$S=[s_{ij}]$ is any $n\times n$ matrix.
For instance, if $n=4$ then
\[
\Delta A=\begin{bmatrix}
 \lambda s_{11}+0+s_{11} &
 \lambda s_{12}+s_{11}+s_{21} &
 \lambda s_{13}+s_{12}+s_{31} &
 \lambda s_{14}+s_{13}+s_{41}
            \\
\lambda s_{21}+0+s_{12} &
 \lambda s_{22}+s_{21}+s_{22} &
 \lambda s_{23}+s_{22}+s_{32} &
 \lambda s_{24}+s_{23}+ s_{42}
             \\
\lambda s_{31}+0+s_{13} & \lambda
s_{32}+s_{31}+s_{23} & \lambda
s_{33}+s_{32}+s_{33} & \lambda
s_{34}+s_{33}+s_{43}
  \\
\lambda s_{41}+0+ s_{14} & \lambda
s_{42}+s_{41}+ s_{24} & \lambda
s_{43}+s_{42}+ s_{34} & \lambda
s_{44}+s_{43}+s_{44}
\end{bmatrix}.
\]
\medskip

\noindent\emph{Case 1: $\lambda\ne \pm
1$.} We reduce $A$ to $0$ by induction:
Assume that the first $t-1$ skew
diagonals of $A$ in the sequence
\eqref{ly} are zero. To preserve them,
we must and will take the first $t-1$
skew diagonals of $S$ equal to zero. If
the $t^{\text{\rm th}}$ skew diagonal
of $S$ is $(x_1,\dots,x_r)$, then we
can add
\begin{equation}\label{mugh}
(\lambda x_{1}+x_r,\
\lambda
x_{2}+x_{r-1},\
\lambda
x_{3}+x_{r-2},\ \dots,\ \lambda
x_{r}+x_1)
\end{equation}
to the $t^{\text{\rm th}}$ skew
diagonal of $A$. Each vector
$(c_1,\dots,c_r)\in\mathbb C^{r}$ is
represented in the form \eqref{mugh}
since the corresponding system of
linear equations
\[
\lambda
x_{1}+x_r=c_1,\quad
\lambda
x_{2}+x_{r-1}=c_2,\ \
\dots,\ \ \lambda
x_{r}+x_1 =c_r
\]
has a nonzero determinant for all
$\lambda\ne \pm 1$. We make the
$t^{\text{\rm th}}$ skew diagonal of
$A$ equal to zero.
\medskip

\noindent\emph{Case 2: $\lambda=1$.} We
say that a vector
$(v_1,v_2,\dots,v_r)\in {\mathbb C}^r$
is \emph{symmetric} if it is equal to
$(v_r,\dots,v_2,v_1)$, and
\emph{skew-symmetric} if it is equal to
$(-v_r,\dots,-v_2,-v_1)$. Let us
consider the equality
\begin{equation}\label{neo}
(x_1,x_2,\dots,x_r)
+(0,y_2,\dots,y_r)
=(a_1,a_2,\dots,a_r)
\end{equation}
in which $\vec x=(x_1,\dots,x_r)$ is
symmetric and $\vec y=(y_2,\dots,y_r)$
is skew-symmetric. The following two
statements hold:
\begin{itemize}
  \item[(a)] If $r$ is odd, then
      for each $a_1,\dots,a_{r}$
      there exist unique $\vec x$
      and $\vec y$ satisfying
      \eqref{neo}.
  \item[(b)] If $r$ is even, then
      for each $a_1,\dots,a_{r-1}$
      there exist unique $a_r$,
      $\vec x$, and $\vec y$
      satisfying \eqref{neo}, and
      for each $a_2,\dots,a_{r}$
      there exist unique $a_1$,
      $\vec x$, and $\vec y$
      satisfying \eqref{neo}.
\end{itemize}

Indeed, if $r=2k+1$, then \eqref{neo}
takes the form
\[
(x_1,\dots,x_k,x_{k+1},x_k,\dots,x_1)+
(0,y_2,\dots,y_{k+1},
-y_{k+1},\dots,-y_2)
=(a_1,\dots,a_{2k+1}),
\]
and so it can be rewritten as follows:
\begin{equation*}\label{mod}
\left[\begin{array}{cccc|ccc}
   1&&&&   0&&   \\
&1&&&   1&\ddots&   \\
&&\ddots&&   &\ddots&0   \\
&&&1&   &&1   \\
\hline
&&1&0   &&&-1   \\
&\udots&\udots&
&&\udots&   \\
1&0&&   &-1&   \\
\end{array}\right]
\begin{bmatrix}
x_1\\x_2\\\vdots\\x_{k+1}
\\\hline y_2\\\vdots\\y_{k+1}
\end{bmatrix}
=\begin{bmatrix}
a_1\\a_2\\\vdots\\a_{k+1}
\\\hline a_{k+2}\\\vdots
\\a_{2k+1}
\end{bmatrix}.
\end{equation*}
The matrix of this system is
nonsingular since we can add the
columns of the second vertical strip to
the corresponding columns of the first
vertical strip and reduce it to the
form
\[
\left[\begin{array}{c|c}
   \begin{matrix}
1&&&\\1&1&&\\&\ddots&\ddots&\\&&1&1
   \end{matrix}&
   \begin{matrix}
   0\ &&   \\
 1\ &\ddots&\phantom{\quad}   \\
 &\ddots&\  0   \\
 &&\  1   \end{matrix}
 \\\hline 0&\begin{matrix}
   &&-1   \\
   &\udots&   \\
   -1&   \end{matrix}
\end{array}\right]
\]
with nonsingular diagonal blocks. This
proves (a).

If $r=2k$, then \eqref{neo} takes the
form
\[
(x_1,\dots,x_k,x_k,\dots,x_1)+
(0,y_2,\dots,y_k,0,-y_k,\dots,-y_2)
=(a_1,\dots,a_{2k}),
\]
and so it can be rewritten as follows:
\begin{equation*}\label{maod}
\left[\begin{array}{cccc|ccc}
   1&&&&   0&&   \\
&1&&&   1&\ddots&   \\
&&\ddots&&   &\ddots&0   \\
&&&1&   &&1   \\
\hline
&&&1   &&&0   \\
&&\udots&   &&\udots&-1   \\
&1&&   &0&\udots&   \\
1&&&   &-1&   \\
\end{array}\right]
\begin{matrix}
\begin{bmatrix}
x_1\\x_2\\\vdots\\x_{k}
\\\hline y_2\\\vdots\\y_{k}
\end{bmatrix}\\\phantom{2}
\end{matrix}
=\begin{bmatrix}
a_1\\a_2\\\vdots\\a_{k}
\\\hline a_{k+1}\\a_{k+2}\\\vdots
\\a_{2k}
\end{bmatrix}.
\end{equation*}
The matrix of this system is
$2k$-by-$(2k-1)$ and can be reduced as
follows. For $i=1,\dots,k-1$, we add
the $i^{\text{\rm th}}$ column of the
second vertical strip to the
$i^{\text{\rm th}}$ column of the first
vertical strip or subtract it from the
$(i+1)^{\text{\rm st}}$ column of the
first vertical strip and obtain
\[
\left[\begin{array}{cccc|ccc}
   1&&&&   0&&   \\
1&1&&&   1&\ddots&   \\
&\ddots&\ddots&&   &\ddots&0   \\
&&1&1&   &&1   \\
\hline
&&&1   &&&0   \\
&&0&   &&\udots&-1   \\
&\udots&&   &0&\udots&   \\
0&&&   &-1&   \\
\end{array}\right]
\quad\text{or}\quad
\left[\begin{array}{cccc|ccc}
   1&&&&   0&&   \\
&0&&&   1&\ddots&   \\
&&\ddots&&   &\ddots&0   \\
&&&0&   &&1   \\
\hline
&&&1   &&&0   \\
&&\udots&1   &&\udots&-1   \\
&1&\udots&   &0&\udots&   \\
1&1&&   &-1&   \\
\end{array}\right].
\]
The first matrix without the first row
and the second matrix without the last
row are nonsingular. This proves (b).
\medskip

Since $\lambda=1$, we can add $\Delta
A=SJ_n(1)+S^T$ to $A$. The matrix $S$
is an arbitrary of size $n\times n$,
write it in the form $S=B+C$ in which
\begin{equation}\label{noiu}
B:=\frac{S+S^T}{2}\quad \text{and}\quad
C:=\frac{S-S^T}{2}
\end{equation}
are its symmetric and skew-symmetric
parts. Then
\[SJ_n(1)+S^T=S+SJ_n(0)+S^T
=2B+(B+C)J_n(0),\] and
so we can add to $A$ any matrix
\begin{equation}\label{kep}
\Delta
A=2B+(B+C)J_n(0)
\end{equation}
in which $B=[b_{ij}]$ is symmetric and
$C=[c_{ij}]$ is skew-symmetric.

We reduce $A$ to the form
$0^{\nwmodels}$ along the skew
diagonals \eqref{ly} as follows. Taking
$b_{11}=-a_{11}/2$, we make the $(1,1)$
entry of $A$ equal to zero. Reasoning
by induction, we fix
$t\in\{1,\dots,n-1\}$ and assume that
\begin{itemize}
  \item the first $t-1$ skew
      diagonals of $A$ have been
      reduced to the form
      $0^{\nwmodels}$ (that is,
      these diagonals coincide with
      the corresponding skew
      diagonals of some matrix of
      the form $0^{\nwmodels}$) and
these skew diagonals are uniquely
determined by the initial matrix
$A$;

  \item if $t\le n$ and $S$
      preserves the first $t-1$
      skew diagonals of $A$ (i.e.,
      the first $t-1$ skew
      diagonals of \eqref{kep} are
      zero) then the first $t-1$
      skew diagonals of $B$ are
      zero.
\end{itemize}

Let $t\le n$. Then the $t^{\text{\rm
th}}$ skew diagonal of \eqref{kep} has
the form
\begin{equation}\label{lwd}
(b_1,b_2,\dots,b_r)
+(0,c_2,\dots,c_r)
\end{equation}
in which $(b_1,b_2,\dots,b_r)$ is an
arbitrary symmetric vector (it is the
$t^{\text{\rm th}}$ skew diagonal of
$2B$) and $(c_2,c_3,\dots,c_r)$ is an
arbitrary skew-symmetric vector (it is
the $(t-1)^{\text{\rm st}}$ skew
diagonal of $C$). The statements (a)
and (b) imply that we can make the
$t^{\text{\rm th}}$ skew diagonal of
$A$ as in $0^{\nwmodels}$ by adding
\eqref{lwd}. Moreover, this skew
diagonal is uniquely determined, and to
preserve it the $t^{\text{\rm th}}$
skew diagonal of $B$ must be zero.

For instance, if $t=2\le n$, then we
add $(b_1,b_1)$ and reduce the second
skew diagonal of $A$ to the form
$(*,0)$ or $(0,*)$. If $t=3\le n$, then
we add $(b_1,b_2,b_1)+(0,c_2,-c_2)$ and
make the third skew diagonal of $A$
equal to $0$.

Let $t>n$. Let us take $S$ in which the
first $t-1$ skew diagonals are equal to
$0$. Then the $t^{\text{\rm th}}$ skew
diagonal of \eqref{kep} has the form
\begin{equation}\label{lwed}
(b_1,b_2,\dots,b_r)
+(c_1,c_2,\dots,c_r)
\end{equation}
in which $(b_1,b_2,\dots,b_r)$ is the
$t^{\text{\rm th}}$ skew diagonal of
$2B$ is symmetric and
$(c_1,c_2,\dots,c_r)$ is the
$(t-1)^{\text{\rm st}}$ skew diagonal
of $C$ without the last entry. Thus,
$(b_1,b_2,\dots,b_r)$ is any symmetric,
$c_1$ is arbitrary, and
$(c_2,c_3,\dots,c_r)$ is any
skew-symmetric. Adding \eqref{lwed}, we
reduce the $t^{\text{\rm th}}$ skew
diagonal of $A$ to $0$.
\medskip

\noindent\emph{Case 3: $\lambda=-1$.}
We can add $SJ_n(-1)+S^T$ to $A$. Write
$S$ in the form $B+C$, in which $B$ and
$C$ are defined in \eqref{noiu}. Then
\[
\Delta A=
SJ_n(-1)+S^T=-S+SJ_n(0)+S^T
=-2C+(B+C)J_n(0).
\]

We reduce $A$ to the form
$0^{\nwvdash}$ along the skew diagonals
\eqref{ly} as follows. The $(1,1)$
entry of $\Delta A$ is $0$; so we
cannot change $a_{11}$. Reasoning by
induction, we fix $t\in\{1,\dots,n-1\}$
and assume that
\begin{itemize}
  \item the first $t-1$ skew
      diagonals of $A$ have been
      reduced to the form
      $0^{\nwvdash}$ and these
      diagonals are uniquely
      determined by the initial
      matrix $A$;
    \item if $t\le n$ and $S$
        preserves the first $t-1$
        skew diagonals of $A$
        (i.e., the first $t-1$ skew
        diagonals of \eqref{kep}
        are zero) then the first
        $t-1$ skew diagonals of $C$
        are zero.
\end{itemize}

If $t\le n$, then we can add to the
$t^{\text{\rm th}}$ skew diagonal of
$A$ any vector
\[
(c_1,c_2,\dots,c_t)+(0,b_2,\dots,b_t)
\]
in which $(c_1,\dots,c_t)$ is
skew-symmetric (it is the $t^{\text{\rm
th}}$ skew diagonal of $-2C$)  and
$(b_2,\dots,b_t)$ is symmetric (it is
the $(t-1)^{\text{\rm st}}$ skew
diagonal of $B$). We make the
$t^{\text{\rm th}}$ skew diagonal of
$A$ as in $0^{\nwvdash}$. For instance,
if $t=2\le n$, then we add
$(c_1,-c_1)+(0,b_2)$ and make the
second skew diagonal of $A$ equal to
zero. If $t=3\le n$, then we add
$(c_1,0,-c_1)+(0,b_2,b_2)$ and reduce
the third skew diagonal of $A$ to the
form $(*,0,0)$ or $(0,0,*)$.

Let $t>n$. Let us take $S$ in which the
first $t-1$ skew diagonals are equal to
$0$. Then we can add to the
$t^{\text{\rm th}}$ skew diagonal of
$A$ any vector
\[
(c_1,c_2,\dots,c_r)+(b_1,b_2,\dots,b_r)
\]
in which $(c_1,\dots,c_r)$ is
skew-symmetric (it is the $t^{\text{\rm
th}}$ skew diagonal of $-2C$), $b_1$ is
arbitrary, and $(b_2,\dots,b_{r-1})$ is
symmetric (it is the $(t-1)^{\text{\rm
st}}$ skew diagonal of $B$ without the
first and the last elements). We make
the $t^{\text{\rm th}}$ skew diagonal
of $A$ equal to zero.
\end{proof}

\begin{lemma}\label{lem2a}
Adding $J_n(\lambda)S+S^T$, we can
reduce each $n\times n$ matrix to
exactly one matrix of the form
\begin{equation}\label{kdis}
\begin{cases}
    0 & \text{if $\lambda\ne
\pm 1$},
             \\
   0^{\semodels}
& \text{if $\lambda=
1$},
           \\
   0^{\sevdash}
& \text{if $\lambda=
-1$}.
  \end{cases}
\end{equation}
\end{lemma}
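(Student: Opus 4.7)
The plan is to deduce Lemma~\ref{lem2a} from the already-proved Lemma~\ref{lem2aaaa} by means of the involution $A\mapsto \bar A:=Z_n A^T Z_n$ on $\mathbb{C}^{\,n\times n}$, where $Z_n$ denotes the $n\times n$ reverse identity matrix (the permutation matrix with $1$'s on the antidiagonal). The crucial algebraic observation is that conjugation by $Z_n$ interchanges $J_n(\lambda)$ with its transpose: $Z_n J_n(\lambda)^T Z_n = J_n(\lambda)$.

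From this one derives the identity
\[
Z_n\bigl(J_n(\lambda)S+S^T\bigr)^T Z_n \;=\; \hat S\,J_n(\lambda)+\hat S^T,\qquad \hat S:=Z_n S^T Z_n,
\]
and since $S\mapsto\hat S$ is a bijection of $\mathbb{C}^{\,n\times n}$, adding an arbitrary matrix of the form $J_n(\lambda)S+S^T$ to $A$ amounts, after applying the involution $A\mapsto\bar A$, to adding an arbitrary matrix of the form $\hat S\,J_n(\lambda)+\hat S^T$ to $\bar A$. Applying Lemma~\ref{lem2aaaa} to $\bar A$, the latter can be reduced to exactly one matrix of the form $0$, $0^{\nwmodels}$, or $0^{\nwvdash}$ according as $\lambda\ne\pm 1$, $\lambda=1$, or $\lambda=-1$; pulling back through the involution, $A$ is reduced to exactly one matrix of the form $\overline 0$, $\overline{0^{\nwmodels}}$, or $\overline{0^{\nwvdash}}$.

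It remains to match these three targets with the patterns in \eqref{kdis}. Since $(\bar M)_{ij}=M_{n+1-j,\,n+1-i}$, the involution reflects a matrix across its antidiagonal. Clearly $\overline 0=0$; applied to the ``first column'' representative of $0^{\nwmodels}$ (which is admissible here because $m=n$), the reflection produces a matrix supported on the last row whose entries form the reversed alternating sequence, and one checks that this is exactly the ``last row'' representative of $0^{\semodels}$. The same bookkeeping yields $\overline{0^{\nwvdash}}=0^{\sevdash}$. This pattern-matching is the only real verification needed; the substantive reductive content is already supplied by Lemma~\ref{lem2aaaa}, so no genuine obstacle is anticipated.
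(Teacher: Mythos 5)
Your proposal is correct and is essentially the paper's own argument: the paper likewise conjugates by the reverse identity $Z$ and transposes, using $ZJ_n(\lambda)^TZ=J_n(\lambda)$ to carry Lemma~\ref{lem2aaaa} over to Lemma~\ref{lem2a}, with the forms $0^{\nwmodels}$, $0^{\nwvdash}$ passing to $0^{\semodels}$, $0^{\sevdash}$ under the antidiagonal reflection. Your write-up merely makes the pattern-matching step more explicit than the paper does.
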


\begin{proof}
By Lemma \ref{lem2aaaa}, for each
$n\times n$ matrix $B$ there exists $R$
such that $M:=B+RJ_n(\lambda)+R^T$ has
the form \eqref{kdi}. Then
\begin{equation*}
M^T=B^T+J_n(\lambda)^TR^T+R.
\end{equation*}
Write
\[
Z:= \begin{bmatrix}
 0&&1
 \\ &\udots&\\
 1&&0
 \end{bmatrix}.
\]
Because
$ZJ_n(\lambda)^TZ=J_n(\lambda)$, we
have
\[
ZM^TZ=ZB^TZ+J_n(\lambda)(ZRZ)^T+ZRZ.
\]
This ensures Lemma \ref{lem2a} since
$ZB^TZ$ is arbitrary and $ZM^TZ$ is of
the form \eqref{kdis}.
\end{proof}

\begin{lemma}\label{lem3a}
Adding $S J_n(\lambda)-J_n(\lambda) S$,
we can reduce each $n\times n$ matrix
to exactly one matrix of the form
$0^{\swarrow}$.
\end{lemma}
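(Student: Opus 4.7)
The plan is to reduce to $\lambda=0$ and then combine a dimension count with the triviality of an intersection. Since $J_n(\lambda)=\lambda I_n+N$ with $N:=J_n(0)$, and $I_n$ commutes with every matrix, we have $SJ_n(\lambda)-J_n(\lambda)S=SN-NS$; so the transformation is (the negative of) $\mathrm{ad}_N$, independent of $\lambda$. In the setup of Lemma \ref{thekd}(i) together with Lemma \ref{t2.1}(iii), the assertion of the lemma is equivalent to the direct-sum decomposition
\[
\mathbb C^{n\times n}\;=\;\{SN-NS:S\in\mathbb C^{n\times n}\}\,\oplus\, 0^{\swarrow}(\mathbb C).
\]

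Next I would do the dimension count. The kernel of $S\mapsto SN-NS$ is the centralizer of $N$, which equals $\mathbb C[N]$ and has dimension $n$, so the image has dimension $n^{2}-n$. The space $0^{\swarrow}(\mathbb C)$ for an $n\times n$ matrix has its $n$ stars in a single row (or column), so $\dim 0^{\swarrow}(\mathbb C)=n$. The two dimensions are therefore complementary, and the decomposition reduces to the claim that the intersection is trivial.

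The main step is then to show that if $M=SN-NS$ is supported in the single row (or column) that defines $0^{\swarrow}$, then $M=0$. Writing entrywise $(SN-NS)_{ij}=s_{i,j-1}-s_{i+1,j}$ with the boundary conventions $s_{i,0}=s_{n+1,j}=0$, the vanishing of $M$ outside the distinguished row gives the interior recurrence $s_{i+1,j}=s_{i,j-1}$ together with the boundary vanishing $s_{2,1}=\cdots=s_{n,1}=0$. Iterating these relations forces $s_{k,j}=s_{1,j-k+1}$ whenever $j\ge k$ and $s_{k,j}=0$ whenever $j<k$; that is, $S$ is upper-triangular Toeplitz, hence a polynomial in $N$. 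But then $S$ commutes with $N$, and so $M=SN-NS=0$.

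The main obstacle is the bookkeeping required to propagate the vanishing constraints through the boundary at the first column and last row; it is entirely analogous in style to the skew-diagonal arguments of Lemma \ref{lem2aaaa}, and in fact simpler, since no symmetric/skew-symmetric decomposition of $S$ enters. An equivalent constructive route, which proves existence and uniqueness in one stroke, is to clear rows $1,\dots,n-1$ of $A+SN-NS$ sequentially by solving the recursion $s_{k+1,j}=s_{k,j-1}+A_{k,j}$ with $s_{k+1,1}=A_{k,1}$ (leaving $s_{1,*}$ free), and then to check directly that the remaining $n$th row is uniquely determined as an element of $0^{\swarrow}(\mathbb C)$ and is independent of the residual freedom in $s_{1,*}$, which parametrizes precisely the centralizer of $N$.
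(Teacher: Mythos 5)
Your proof is correct, and it takes a genuinely different route from the paper's. The first step is shared: since the scalar parts cancel, the map is $S\mapsto SN-NS$ with $N=J_n(0)$. From there the paper argues constructively, writing out $SN-NS$ entrywise and clearing $A$ along the diagonals $a_{n1},\,(a_{n-1,1},a_{n2}),\dots,a_{1n}$; the point (left implicit) is that each diagonal of the increment on or below the main diagonal is a telescoping vector whose entries sum to zero, so exactly one entry per such diagonal survives and is uniquely determined, while the diagonals above the main diagonal are unconstrained. You instead prove the equivalent direct-sum statement $\mathbb C^{n\times n}=\operatorname{im}(\mathrm{ad}_N)\oplus 0^{\swarrow}(\mathbb C)$ by a dimension count (the centralizer of the nonderogatory block $N$ is $\mathbb C[N]$, of dimension $n$) together with triviality of the intersection: a commutator supported in the last row forces $S$ to be upper-triangular Toeplitz, hence in $\mathbb C[N]$, so the commutator vanishes. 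Both arguments are complete; yours is more conceptual and makes the uniqueness claim transparent, whereas the paper's diagonal-by-diagonal reduction is the template it reuses for the rectangular off-diagonal blocks in Sections \ref{sub3} and \ref{sub4}, where the relevant kernel is no longer the centralizer of a single Jordan block and a pure dimension count would be less immediate. Two small remarks: the equivalence you invoke between the lemma's statement and the direct-sum decomposition is the elementary (ii)$\Leftrightarrow$(iii) of Lemma \ref{t2.1} applied to the subspace $\{SN-NS\}$ rather than to $T(A)$ itself, which is harmless since that equivalence is pure linear algebra; and your argument fixes the last-row form of $0^{\swarrow}$, with the first-column form handled by the symmetric computation. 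Your closing constructive variant (solving $s_{k+1,j}=s_{k,j-1}+A_{kj}$ row by row) is essentially the paper's reduction reorganized by rows, and the one loose end there --- that the surviving $n$th row is independent of the free first row of $S$ --- is exactly the intersection-triviality you already established.
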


\begin{proof}
Let $A=[a_{ij}]$ be an arbitrary
$n\times n$ matrix. Adding
\begin{multline*}
SJ_n(\lambda)-J_n(\lambda)S
=SJ_n(0)-J_n(0)S \\=
\begin{bmatrix}
 s_{21}-0&s_{22}-s_{11}&s_{23}-s_{12}
 &\dots&s_{2n}-s_{1,n-1}
 \\ \hdotsfor{5}\\
 s_{n-1,1}-0&s_{n-1,2}-s_{n-2,1}
 &s_{n-1,3}-s_{n-2,2}
 &\dots&s_{n-1,n}-s_{n-2,n-1}\\
 s_{n1}-0&s_{n2}-s_{n-1,1}&s_{n3}-s_{n-1,2}
 &\dots&s_{nn}-s_{n-1,n-1}
 \\ 0-0&0-s_{n1}&0-s_{n2}&\dots&0-s_{n,n-1}
 \end{bmatrix},
\end{multline*}
we reduce $A$ along the diagonals
\begin{equation*}\label{lyj}
a_{n1},\
(a_{n-1,1},a_{n2}),\
(a_{n-2,1},a_{n-1,2},
a_{n3}),\ \dots,\ a_{1n}
\end{equation*}
to the form $0^{\swarrow}$.
\end{proof}

\subsection{Diagonal blocks ${\cal D}
(\Gamma_n)$} \label{sub6}

Due to Lemma \ref{thekd}(i), it
suffices to prove that each $n\times n$
matrix $A$ can be reduced to exactly
one matrix of the form \eqref{uwm} by
adding $\Delta
A:=S^T\Gamma_n+\Gamma_nS$. Write
$\Gamma_n$ as the sum of its symmetric
and skew-symmetric parts:
$\Gamma_n=\Gamma_n^{(s)}+\Gamma_n^{(c)},$
where
\[
\Gamma_n^{(s)}=
    \begin{bmatrix}
0&&&&\udots
\\&&&0&
\udots
\\
&&0&1&\\ &0&-1&\\ 0&1&
&&0
\end{bmatrix}
     \ \text{ and }\
\Gamma_n^{(c)}=
\begin{bmatrix}
0&&&&\udots
\\&&&-1&\\
&&1&&\\ &-1&&\\ 1&&
&&0
\end{bmatrix}\
\text{if $n$ is even},
\]
\[
\Gamma_n^{(s)}=
\begin{bmatrix}
0&&&&\udots
\\&&&-1&\\
&&1&&\\ &-1&&\\ 1&&
&&0
\end{bmatrix}
     \ \text{ and }\
\Gamma_n^{(c)}=
    \begin{bmatrix}
0&&&&\udots
\\&&&0&
\udots\\
&&0&1&\\ &0&-1&\\ 0&1&
&&0
\end{bmatrix}
     \
\text{if $n$ is odd}.
\]
Then the symmetric and skew-symmetric
parts of $\Delta A$ are
\[
\Delta
A^{(s)}=S^T\Gamma_n^{(s)}
+\Gamma_n^{(s)}S,\qquad
\Delta
A^{(c)}=S^T\Gamma_n^{(c)}
+\Gamma_n^{(c)}S
\]
in which $S=[s_{ij}]$ is any $n\times
n$ matrix.
\medskip

\noindent\emph{Case 1: $n$ is even.}
Then
\[
\Delta
A^{(s)}=\begin{bmatrix}
0+0&s_{n1}+0&-s_{n-1,1}+0&\dots&s_{21}+0\\
0+s_{n1}&s_{n2}+s_{n2}&-s_{n-1,2}+s_{n3}
&\dots&s_{22}+s_{nn}\\
0-s_{n-1,1}&s_{n3}-s_{n-1,2}
&-s_{n-1,3}-s_{n-1,3}
&\dots&s_{23}-s_{n-1,n}\\
\hdotsfor{5}\\
0+s_{21}&s_{nn}+s_{22}&-s_{n-1,n}+s_{23}
&\dots&s_{2n}+s_{2n}\\
\end{bmatrix},
\]
\[
\Delta A^{(c)}= \begin{bmatrix}
s_{n1}-s_{n1}&-s_{n-1,1}-s_{n2}
&s_{n-2,1}-s_{n3}
&\dots&-s_{11}-s_{nn}\\
s_{n2}+s_{n-1,1}&-s_{n-1,2}+s_{n-1,2}
&s_{n-2,2}+s_{n-1,3}
&\dots&-s_{12}+s_{n-1,n}\\
s_{n3}-s_{n-2,1}&-s_{n-1,3}-s_{n-2,2}
&s_{n-2,3}-s_{n-2,3}
&\dots&-s_{13}-s_{n-2,n}\\
\hdotsfor{5}\\
s_{nn}+s_{11}&-s_{n-1,n}+s_{12}
&s_{n-2,n}+s_{13}
&\dots&-s_{1n}+s_{1n}\\
\end{bmatrix}.
\]

We reduce $A=[a_{ij}]$ to the form
$0^{\nwvdash}$ along its skew diagonals
\eqref{ly} as follows. The $(1,1)$
entry of $\Delta A=\Delta
A^{(s)}+\Delta A^{(c)}$ is zero, and so
the $(1,1)$ entry of $A$ is not
changed. Reasoning by induction, we fix
$t\in\{1,\dots,n-1\}$ and assume that
\begin{itemize}
  \item the first $t$ skew
      diagonals of $A$ have been
      reduced to the form
      $0^{\nwvdash}$ and they are
      uniquely determined by the
      initial $A$;

  \item the addition of $\Delta A$
      preserves the first $t$ skew
      diagonals of $A$ if and only
      if the first $t-1$ diagonals
 of $S$ starting from the lower
 left diagonal
\[
\begin{tikzpicture}\small
\matrix (magic) [matrix of math nodes]
  {%
  \cdots & \cdots & \cdots & \cdots  & \cdots \\
  s_{n-t+2,1}\!\!\!\! & \cdots & \cdots& \cdots & \cdots \\
    \cdots\  & \cdots & \cdots & \cdots & \ \cdots  \\
s_{n-1,1} & \cdots &  \cdots& \cdots  & \cdots \\
  \ s_{n1}\  & s_{n2}& \cdots & s_{n,t-1}
   & \cdots \\  };
\draw[help lines] (magic-5-4.south
east) -- (magic-2-1.north west);
\draw[help lines] (magic-5-2.south
east) -- (magic-4-1.north west);
\draw[help lines] (magic-5-1.south
east) -- (magic-5-1.north west);
\end{tikzpicture}
\]
are zero and its $t^{\text{th}}$
      diagonal
      \[ (s_{n-t+1,1},s_{n-t+2,2},
      s_{n-t+3,3}\dots,s_{n-2,t-2},
      s_{n-1,t-1},s_{n,t})
      \] is symmetric if $t$
      is odd and skew-symmetric if
      $t$ is even.
\end{itemize}

Write
\[
(v_1,v_2,\dots,v_t):=\left(
(-1)^{t-1}s_{n-t+1,1},
      \dots,s_{n-2,t-2},
      -s_{n-1,t-1},s_{n,t}
\right);
\]
this vector is symmetric for all $t$.

The $(t+1)^{\text{st}}$ skew diagonal
of $\Delta A^{(s)}$ is
\begin{equation}\label{jy}
(0,v_t,v_{t-1},\dots,v_2,v_1)
+(v_1,v_2,\dots,v_{t-1},v_t,0).
\end{equation}
If $t$ is odd, then every symmetric
vector of dimension $t+1$ is
represented in the form \eqref{jy}. If
$t$ is even, then \eqref{jy} without
the first and the last elements is an
arbitrary symmetric vector of dimension
$t-1$ and the first (and the last)
element of \eqref{jy} is fully
determined by the other elements. Since
the $(t+1)^{\text{st}}$ skew diagonal
of $\Delta A^{(c)}$ is an arbitrary
skew-symmetric vector of dimension
$t+1$, this means that the
$(t+1)^{\text{st}}$ skew diagonal of
$A$ is reduced to zero if $t$ is odd,
and to the form $(*,0,\dots,0)$ or
$(0,\dots,0,*)$ if $t$ is even. To
preserve it, we hereafter must take
those $S$ in which the
$(t+1)^{\text{st}}$ skew diagonal of
$\Delta A^{(c)}$ is zero; this means
that the $(t+1)^{\text{st}}$ diagonal
of $S$ is symmetric if $t+1$ is odd and
skew-symmetric if $t+1$ is even.

Thus, the first $n$ skew diagonals in
$A$ have the form of the corresponding
diagonals in $0^{\nwvdash}$.

The $(n+1)^{\text{st}}$ skew diagonal
of $\Delta A^{(s)}$ has the form
\[
(v_n,v_{n-1},\dots,v_2)
+(v_2,\dots,v_{n-1},v_n),
\]
(compare with \eqref{jy}) and every
symmetric vector of dimension $n-1$ is
represented in this form. Hence, the
$(n+1)^{\text{st}}$ skew diagonal of
$\Delta A$ is an arbitrary vector of
dimension $n-1$ and we make the
$(n+1)^{\text{st}}$ skew diagonal of
$A$  equal to zero. Analogously, we
make its $n+2$, $n+3,\ \dots$ skew
diagonals equal to zero and reduce $A$
to the form $0^{\nwvdash}$.
\medskip

\noindent\emph{Case 2: $n$ is odd.}
Then
\[
\Delta A^{(s)}=
\begin{bmatrix}
s_{n1}+s_{n1}&-s_{n-1,1}+s_{n2}
&s_{n-2,1}+s_{n3}
&\dots&s_{11}+s_{nn}\\
s_{n2}-s_{n-1,1}&-s_{n-1,2}-s_{n-1,2}
&s_{n-2,2}-s_{n-1,3}
&\dots&s_{1n}-s_{n-1,n}\\
s_{n3}+s_{n-2,1}&-s_{n-1,3}+s_{n-2,2}
&s_{n-2,3}+s_{n-2,3}
&\dots&s_{13}+s_{n-2,n}\\
\hdotsfor{5}\\
s_{nn}+s_{11}&-s_{n-1,n}+s_{12}
&s_{n-2,n}+s_{13}
&\dots&s_{1n}+s_{1n}\\
\end{bmatrix},
\]
\[
\Delta
A^{(c)}=\begin{bmatrix}
0+0&s_{n1}+0&-s_{n-1,1}+0&\dots&-s_{21}+0\\
0-s_{n1}&s_{n2}-s_{n2}&-s_{n-1,2}-s_{n3}
&\dots&-s_{22}-s_{nn}\\
0+s_{n-1,1}&s_{n3}+s_{n-1,2}
&-s_{n-1,3}+s_{n-1,3}
&\dots&-s_{23}+s_{n-1,n}\\
\hdotsfor{5}\\
0+s_{21}&s_{nn}+s_{22}&-s_{n-1,n}+s_{23}
&\dots&-s_{2n}+s_{2n}\\
\end{bmatrix}.
\]
We reduce $A$ along its skew diagonals
\eqref{ly}. The first skew diagonal of
$\Delta A^{(s)}$ is arbitrary; we make
the first entry of $A$ equal to zero.

Let $t<n$. Assume that
\begin{itemize}
  \item the first $t$ skew
      diagonals of $A$ have been
      reduced to the form
      $0^{\nwmodels}$ and they are
      uniquely determined by the
      initial $A$;
  \item the addition of $\Delta A$
      preserves these diagonals if
      and only if the first $t-1$
      diagonals of $S$, starting
      from the lower left diagonal,
      are zero and the
      $t^{\text{th}}$ diagonal
      $(u_1,\dots,u_t)$ of $S$ is
      symmetric if $t$ is even and
      skew-symmetric if $t$ is odd.

\end{itemize}
Then the vector
\[
(v_1,v_2,\dots,v_t):=\left((-1)^{t-1}u_1,
,\dots,u_{t-2},-u_{t-1},
u_t\right)
\]
is skew-symmetric for all $t$.

The $(t+1)^{\text{st}}$ skew diagonal
of $\Delta A^{(c)}$ is
\begin{equation}\label{jys}
(0,v_t,v_{t-1},\dots,v_2,v_1)
-(v_1,v_2,\dots,v_{t-1},v_t,0).
\end{equation}
If $t$ is even, then every
skew-symmetric vector of dimension
$t+1$ is represented in the form
\eqref{jys}. If $t$ is odd, then
\eqref{jys} without the first and the
last elements is an arbitrary
skew-symmetric vector of dimension
$t-1$ and the first (and the last)
element of \eqref{jys} is fully
determined by the other elements. Since
the $(t+1)^{\text{st}}$ skew diagonal
of $\Delta A^{(s)}$ is an arbitrary
symmetric vector of dimension $t+1$,
this means that the $(t+1)^{\text{st}}$
skew diagonal of $A$ reduces to $0$ if
$t$ is even, and to the form
$(*,0,\dots,0)$ or $(0,\dots,0,*)$ if
$t$ is odd. To preserve it, we
hereafter must take those $S$ in which
the $(t+1)^{\text{st}}$ skew diagonal
of $\Delta A^{(s)}$ is zero; this means
that the $(t+1)^{\text{st}}$ diagonal
of $S$ is symmetric if $t+1$ is even
and skew-symmetric if $t+1$ is odd.

The first $n$ skew diagonals in $A$
have the form of the corresponding
diagonals in $0^{\nwmodels}$. The
$(n+1)^{\text{st}}$ skew diagonal in
$\Delta A^{(c)}$ has the form
\[
(v_n,v_{n-1},\dots,v_2)
-(v_2,\dots,v_{n-1},v_n)
\]
(compare with \eqref{jys}) and every
skew-symmetric vector is represented in
this form. Hence, the
$(n+1)^{\text{st}}$ skew diagonal of
$\Delta A$ is an arbitrary vector of
dimension $n-1$ and we make the
$(n+1)^{\text{st}}$ skew diagonal of
$A$ equal to zero. Analogously, we make
its $n+2$, $n+3, \dots$ skew diagonals
equal to zero and reduce $A$ to the
form $0^{\nwmodels}$.

\subsection{Diagonal blocks ${\cal
D}(J_n(0))$} \label{sub1}

Due to Lemma \ref{thekd}(i), it
suffices to prove that each $n\times n$
matrix $A$ can be reduced to exactly
one matrix of the form \eqref{lsiu} by
adding
\begin{align}\nonumber
\Delta A:&=S^T J_n(0)+J_n(0)S
    \\\label{ed2}&=
\begin{bmatrix}
0+s_{21}&s_{11}+s_{22}&s_{21}+s_{23}
&\dots& s_{n-1,1}+s_{2n}
  \\
0+s_{31}&s_{12}+s_{32}&s_{22}+s_{33}
&\dots& s_{n-1,2}+s_{3n}
  \\
\hdotsfor{5}
  \\
0+s_{n1}&s_{1,n-1}+s_{n2}&s_{2,n-1}+s_{n3}
&\dots& s_{n-1,n-1}+s_{nn}
  \\
0+0&s_{1n}+0&s_{2n}+0 &\dots&
s_{n-1,n}+0
\end{bmatrix}
\end{align}
in which $S=[s_{ij}]$ is any $n\times
n$ matrix. Thus,
\begin{equation*}\label{uyhk}
\Delta
A=[b_{ij}],\qquad
b_{ij}:=s_{j-1,i}+s_{i+1,j}
\qquad
(s_{0i}:=0,\quad
s_{n+1,j}:=0),
\end{equation*}
and so all entries of $\Delta A$ have
the form $s_{kl}+s_{l+1,k+1}$. The
transitive closure of
$(k,l)\sim(l+1,k+1)$ is an equivalence
relation on the set
$\{1,\dots,n\}\times \{1,\dots,n\}$.
Represent $\Delta A$ as the sum
\[
\Delta
A=B_{n1}+B_{n-1,1}+\dots+B_{11}+B_{12}
+\dots+B_{1n}
\]
of matrices that correspond to the
equivalence classes and are defined as
follows. Each $B_{1j}$
$(j=1,2,\dots,n)$ is obtained from
$\Delta A$ by replacing with $0$ all of
its entries except for
\begin{equation}\label{hter}
s_{1j}+s_{j+1,2},\
s_{j+1,2}+s_{3,j+2},\
s_{3,j+2}+s_{j+3,4},\ \dots
\end{equation}
and each $B_{i1}$ $(i=2,3,\dots,n)$ is
obtained from $\Delta A$ by replacing
with $0$ all of its entries except for
\begin{equation}\label{hter1}
0+s_{i1},\
s_{i1}+s_{2,i+1}, \
s_{2,i+1}+s_{i+2,3}, \
s_{i+2,3}+s_{4,i+3}, \
s_{4,i+3}+s_{i+4,5},\ \dots;
\end{equation}
the pairs of indices in \eqref{hter}
and in \eqref{hter1} are equivalent:
\[
(1,j)\sim(j+1,2)\sim
(3,j+2)\sim(j+3,4)\sim\dots
\]
and
\[
(i,1)\sim(2,i+1) \sim
(i+2,3)\sim
(4,i+3)\sim
(i+4,5)\sim\dots .
\]
We call the entries \eqref{hter} and
\eqref{hter1} the \emph{main entries}
of $B_{1j}$ and $B_{i1}$ ($i>1$). The
matrices $B_{n1},
\dots,B_{11},B_{12},\dots, B_{1n}$ have
no common $s_{ij}$.

An arbitrary sequence of complex
numbers can be represented in the form
\eqref{hter}. The entries \eqref{hter1}
are linearly dependent only if the last
entry in this sequence has the form
$s_{kn}+0$ (see \eqref{ed2}); then
$(k,n)=(2p,i-1+2p)$ for some $p$, and
so $i=n+1-2p$. Thus the following
sequences \eqref{hter1} are linearly
dependent:
\begin{gather*}
0+s_{n-1,1},\ s_{n-1,1}+s_{2n}, \
s_{2n}+0;
   \\
0+s_{n-3,1},\ s_{n-3,1}+s_{2,n-2}, \
s_{2,n-2}+s_{n-1,3}, \
s_{n-1,3}+s_{4n}, \ s_{4n}+0;\ \dots
\end{gather*}
One of the main entries of each of the
matrices $B_{n-1,1}$, $B_{n-3,1}$,
$B_{n-5,1},\ \dots$ is the linear
combination of the other main entries
of this matrix, which are arbitrary.
The main entries of the other matrices
$B_{i1}$ and $B_{1j}$ are arbitrary.
Adding $B_{i1}$ and $B_{1j}$, we reduce
$A$ to the form $0^{\swvdash}$.

\section{Off-diagonal blocks of $\cal
D$ that correspond to summands of
$A_{\text{can}}$ of the same type}

Now we verify the condition (ii) of
Lemma \ref{thekd} for those
off-diagonal blocks of $\cal D$
(defined in Theorem \ref{teo2}(ii))
whose horizontal and vertical strips
contain summands of $A_{\text{can}}$ of
the same type.

\subsection{Pairs of blocks ${\cal
D}(H_m(\lambda),\, H_n(\mu))$}
\label{sub3}

Due to Lemma \ref{thekd}(ii), it
suffices to prove that each pair
$(B,A)$ of $2n\times 2m$ and $2m\times
2n$ matrices can be reduced to exactly
one pair of the form \eqref{lsiu1} by
adding
\[
(S^T
H_m(\lambda)+ H_n(\mu)
R,\:R^TH_n(\mu)
+H_m(\lambda)S),\quad S\in
 {\mathbb C}^{2m\times 2n},\ R\in
 {\mathbb C}^{2n\times 2m}.
\]

Taking $R=0$ and
$S=-H_m(\lambda)^{-1}A$, we reduce $A$
to $0$. To preserve $A=0$ we hereafter
must take $S$ and $R$ such that
$R^TH_n(\mu) +H_m(\lambda)S=0$; that
is,
\[
S=-H_m(\lambda)^{-1}
R^TH_n(\mu),
\]
and so we can add
\[
\Delta B:= -H_n(\mu)^T
R
H_m(\lambda)^{-T}H_m(\lambda)+
H_n(\mu) R
\]
to $B$.

 Write $
P:=-H_n(\mu)^T R,$ then
$R=-H_n(\mu)^{-T} P$ and
\begin{equation}\label{kif}
\Delta B=
 P
\begin{bmatrix}
J_m(\lambda)&0\\
0&J_m(\lambda)^{-T}
\end{bmatrix}
- \begin{bmatrix}
J_n(\mu)^{-T}&0\\0&J_n(\mu)
\end{bmatrix} P.
\end{equation}

Let us partition $B$, $\Delta B$, and
$P$ into $n\times m$ blocks:
\[
B=\begin{bmatrix}
B_{11}&B_{12}\\
B_{21}& B_{22}
\end{bmatrix},\qquad
\Delta
B=\begin{bmatrix}
\Delta B_{11}&\Delta
B_{12}\\\Delta
B_{21}&\Delta B_{22}
\end{bmatrix},\qquad
P=
\begin{bmatrix}
X&Y\\Z&T
\end{bmatrix}.
\]
By \eqref{kif},
\begin{align*}
\Delta
B_{11}&=XJ_m(\lambda)
-J_n(\mu)^{-T}X,
  &
\Delta
B_{12}&=YJ_m(\lambda)^{-T}
-J_n(\mu)^{-T}Y,
 \\
\Delta B_{21}&=
ZJ_m(\lambda)-J_n(\mu)Z,
 &
\Delta B_{22}&=
TJ_m(\lambda)^{-T}-J_n(\mu)T.
\end{align*}

These equalities show that we can
reduce each block $B_{ij}$ separately
by adding $\Delta B_{ij}$.
\medskip

(i) Fist we reduce $B_{11}$ by adding
$\Delta
B_{11}=XJ_m(\lambda)-J_n(\mu)^{-T}X$.

If $\lambda\ne \mu^{-1}$, then $\Delta
B_{11}$ is an arbitrary $n\times m$
matrix since $J_m(\lambda)$ and
$J_n(\mu)^{-T}$ have no common
eigenvalues; we make $B_{11}=0$.

Let $\lambda= \mu^{-1}$. Then
\begin{equation*}
J_n(\mu)^{-T}=
\left[\begin{MAT}(e){ccccc}
\mu^{-1}&&&&0 \\
-\mu^{-2}&\mu^{-1}&&&\\
\mu^{-3}&-\mu^{-2}&\mu^{-1}&&\\
\ddots&\ddots&\ddots&\ddots&\\
\ddots&\ddots&\mu^{-3}&-\mu^{-2}&\mu^{-1}\\
\end{MAT}\right]=
\left[\begin{MAT}(e){ccccc}
\lambda&&&&0 \\
-\lambda^{2}&\lambda&&&\\
\lambda^{3}&-\lambda^{2}&\lambda&&\\
\ddots&\ddots&\ddots&\ddots&\\
\ddots&\ddots&\lambda^{3}&-\lambda^{2}&\lambda\\
\end{MAT}\right].
\end{equation*}
Adding
\begin{multline*}
\Delta
B_{11}=XJ_m(0)-(J_n(\mu)^{-T}-\lambda
I_n)X
  \\=
\begin{bmatrix}
0&x_{11}&\dots&x_{1,m-1} \\
0&x_{21}&\dots&x_{2,m-1} \\
0&x_{31}&\dots&x_{3,m-1} \\
\hdotsfor{4}
\end{bmatrix}
    +\lambda^2
\begin{bmatrix}
0&\dots&0 \\
x_{11}&\dots&x_{1m} \\
x_{21}&\dots&x_{2m} \\
\hdotsfor{3}\\
\end{bmatrix}
    -\lambda^3
\begin{bmatrix}
0&\dots&0 \\
0&\dots&0 \\
x_{11}&\dots&x_{1m} \\
\hdotsfor{3}\\
\end{bmatrix}+\cdots,
\end{multline*}
we reduce $B_{11}$ to the form
$0^{\nwarrow}$ along the skew diagonals
starting from the upper left corner.
\medskip

(ii) Let us reduce $B_{12}$ by adding
$\Delta B_{12}
=YJ_m(\lambda)^{-T}-J_n(\mu)^{-T}Y$.

If $\lambda\ne \mu$, then $\Delta
B_{12}$ is arbitrary; we make
$B_{12}=0$.

Let $\lambda= \mu$. Write $F:=J_n(0)$.
Since
\[
J_n(\lambda )^{-1}=(\lambda I_n+F)^{-1}=
\lambda ^{-1}I_n-\lambda ^{-2}F+\lambda ^{-3}F^2-
\cdots,
\]
we have
\begin{align*}
\Delta B_{12}
&=Y(J_m(\lambda)^{-T}-\lambda^{-1} I_m)
-(J_n(\lambda)^{-T}-\lambda^{-1} I_n)Y
  \\&=
-\lambda^{-2}\begin{bmatrix}
y_{12}&\dots&y_{1m}&0 \\
y_{22}&\dots&y_{2m}&0 \\
y_{32}&\dots&y_{3m}&0 \\
\hdotsfor{4}
\end{bmatrix}
    +\lambda^{-2}
\begin{bmatrix}
0&\dots&0 \\
y_{11}&\dots&y_{1m} \\
y_{21}&\dots&y_{2m} \\
\hdotsfor{3}\\
\end{bmatrix}
+\cdots
\end{align*}
We reduce $B_{12}$ to the form
$0^{\nearrow}$ along its diagonals
starting from the upper right corner.
\medskip

(iii) Let us reduce $B_{21}$ by adding
$\Delta B_{21} =
ZJ_m(\lambda)-J_n(\mu)Z$.

If $\lambda\ne \mu$, then $\Delta
B_{21}$ is arbitrary; we make
$B_{21}=0$.

If $\lambda= \mu$, then
\begin{align*}
\Delta B_{21} &=Z(J_m(\lambda)-\lambda
I_m) -(J_n(\lambda)-\lambda I_n)Z
  \\&=
\begin{bmatrix}
0&z_{11}&\dots&z_{1,m-1} \\
\hdotsfor{4}\\
0&z_{n-1,1}&\dots&z_{n-1,m-1} \\
0&z_{n1}&\dots&z_{n,m-1}
\end{bmatrix}
    -
\begin{bmatrix}
z_{21}&\dots&z_{2m} \\
\hdotsfor{3}\\
z_{n1}&\dots&z_{nm} \\
0&\dots&0
\end{bmatrix};
\end{align*}
we reduce $B_{12}$ to the form
$0^{\swarrow}$ along its diagonals
starting from the lover left corner.
\medskip

(iv) Finally, reduce $B_{22}$ by adding
$\Delta B_{22} =
TJ_m(\lambda)^{-T}-J_n(\mu)T$.

If $\lambda\ne \mu^{-1}$, then $\Delta
B_{22}$ is arbitrary; we make
$B_{22}=0$.

If $\lambda= \mu^{-1}$, then
\begin{multline*}
\Delta B_{22} = T(J_m(\lambda)^{-T}-\mu
I_m)-(J_n(\mu)-\mu I_n)T
  \\=
-\mu^2\begin{bmatrix}
\dots&t_{1m}&0 \\
\hdotsfor{3}\\
\dots&t_{n-1,m}&0 \\
\dots&t_{nm}&0
\end{bmatrix}
        +
\mu^3\begin{bmatrix}
\dots&t_{1m}&0&0 \\
\hdotsfor{4}\\
\dots&t_{n-1,m}&0&0 \\
\dots&t_{nm}&0&0
\end{bmatrix}
-\dots
    -
\begin{bmatrix}
t_{21}&\dots&t_{2m} \\
\hdotsfor{3}\\
t_{n1}&\dots&t_{nm} \\
0&\dots&0
\end{bmatrix};
\end{multline*}
we reduce $B_{22}$ to the form
$0^{\searrow}$ along its skew diagonals
starting from the lover right corner.

\subsection{Pairs of blocks ${\cal D}
(\Gamma_m,\Gamma_n)$} \label{sub7}

Due to Lemma \ref{thekd}(ii), it
suffices to prove that each pair
$(B,A)$ of $n\times m$ and $m\times n$
matrices can be reduced to exactly one
pair of the form \eqref{lsiu2} by
adding
\[
(S^T
\Gamma_m+ \Gamma_n
R,\:R^T\Gamma_n
+\Gamma_mS),\qquad S\in
 {\mathbb C}^{m\times n},\ R\in
 {\mathbb C}^{n\times m}.
\]

Taking $R=0$ and $S=-\Gamma_m^{-1}A$,
we reduce $A$ to $0$. To preserve $A=0$
we hereafter must take $S$ and $R$ such
that $R^T\Gamma_n +\Gamma_mS=0$; that
is, $ S=-\Gamma_m^{-1} R^T\Gamma_n, $
and so we can add
\[
 \Delta B:=
-\Gamma_n^T R
\Gamma_m^{-T}\Gamma_m+\Gamma_n R
\]
to $B$.

Write $P:=\Gamma_n^T R$, then
\begin{equation*}\label{due}
\Delta B=  -P
(\Gamma_m^{-T}\Gamma_m)+(\Gamma_n
\Gamma_n^{-T})P.
\end{equation*}
Since
\begin{equation*}\label{1n}
\Gamma_n=
\begin{bmatrix} 0&&&&
\udots
\\&&&-1&\udots
\\&&1&1\\ &-1&-1& &\\
1&1&&&0
\end{bmatrix},\quad
\Gamma_n^{-1}=(-1)^{n+1}
 \begin{bmatrix}
\vdots&\vdots&\vdots&\vdots&\udots
\\
-1&-1&-1&-1&\\ 1&1&1&&\\ -1&-1&&&\\
1&&&&0
\end{bmatrix},
\end{equation*}
we have
\begin{equation*}\label{1x11}
\Gamma_m^{-T}\Gamma_m=
(-1)^{m+1}
\begin{bmatrix} 1&2&&*
\\&1&\ddots&\\
&&\ddots&2\\ 0 &&&1
\end{bmatrix}
\end{equation*}
and
\begin{equation}\label{1x12}
\Gamma_n\Gamma_n^{-T}=
(-1)^{n+1}
\begin{bmatrix} 1&&&0
\\-2&1&&\\
&\ddots&\ddots&\\
*&&-2&1
\end{bmatrix}.
\end{equation}

If $n-m$ is odd, then
\[
(-1)^{n+1}\Delta B=2P+
P
\begin{bmatrix} 0&2&&*
\\&0&\ddots&\\
&&\ddots&2\\ 0 &&&0
\end{bmatrix}
+
\begin{bmatrix} 0&&&0
\\-2&0&&\\
&\ddots&\ddots&\\
*&&-2&0
\end{bmatrix}
P
\]
and we reduce $B$ to $0$ along its skew
diagonals starting from the upper left
corner.

If $m-n$ is even, then
\[
(-1)^{n+1}\Delta B=
-P
\begin{bmatrix} 0&2&&*
\\&0&\ddots&\\
&&\ddots&2\\ 0 &&&0
\end{bmatrix}+
\begin{bmatrix} 0&&&0
\\-2&0&&\\
&\ddots&\ddots&\\
*&&-2&0
\end{bmatrix}
P
\]
and we reduce $B$ to the form
$0^{\nwarrow}$ along its skew diagonals
starting from the upper left corner.

\subsection{Pairs of blocks ${\cal D}
(J_m(0),J_n(0))$ with $m\ge n$.}
\label{sub4}

Due to Lemma \ref{thekd}(ii), it
suffices to prove that each pair
$(B,A)$ of $n\times m$ and  $m\times n$
matrices with $m\ge n$ can be reduced
to exactly one pair of the form
\eqref{lsiu3} by adding the matrices
\begin{equation*}\label{jfr}
\Delta A=R^TJ_n(0)
+J_m(0)S,\qquad \Delta
B^T=J_m(0)^TS+
R^TJ_n(0)^T
\end{equation*}
to $A$ and $B^T$ (it is convenient for
us to reduce the transpose of $B$).

Write $S=[s_{ij}]$ and $R^T=[-r_{ij}]$
(they are $m$-by-$n$). Then
\begin{equation*}\label{drg}
\Delta A=
\begin{bmatrix}
 s_{21}-0&s_{22}-r_{11}&s_{23}-r_{12}
 &\dots&s_{2n}-r_{1,n-1}
 \\ \hdotsfor{5}\\
 s_{m-1,1}-0&s_{m-1,2}-r_{m-2,1}
 &s_{m-1,3}-r_{m-2,2}
 &\dots&s_{m-1,n}-r_{m-2,n-1}\\
 s_{m1}-0&s_{m2}-r_{m-1,1}&s_{m3}-r_{m-1,2}
 &\dots&s_{mn}-r_{m-1,n-1}
 \\ 0-0&0-r_{m1}&0-r_{m2}&\dots&0-r_{m,n-1}
 \end{bmatrix}
\end{equation*}
and
\begin{equation*}\label{drgs}
\Delta B^T=
\begin{bmatrix}
 0-r_{12}&0-r_{13}
 &\dots&0-r_{1n}&0-0
    \\
 s_{11}-r_{22}&s_{12}-r_{23}&\dots&
 s_{1,n-1}-r_{2n}
 &s_{1n}-0
   \\ \hdotsfor{5}\\
 s_{m-2,1}-r_{m-1,2}&s_{m-2,2}-r_{m-1,3}&
 \dots&s_{m-2,n-1}-r_{m-1,n}
 &s_{m-2,n}-0
 \\  s_{m-1,1}-r_{m2}&s_{m-1,2}-r_{m3}&
 \dots&s_{m-1,n-1}-r_{mn}
 &s_{m-1,n}-0
 \end{bmatrix}.
\end{equation*}
Adding $\Delta A$, we can reduce $A$ to
the form $0^{\swarrow}$; for
definiteness, we take $A$ in the form
\begin{equation}\label{gu}
0^{\downarrow}:=\begin{bmatrix}
   0_{m-1,n}
\\
  *\ *\ \cdots\ *
\end{bmatrix}.
\end{equation}
To preserve this form, we hereafter
must take
\begin{equation*}\label{VRS}
s_{21}=\dots=s_{m1}=0,\qquad
s_{ij}=r_{i-1,j-1}\quad
(2\le i\le m,\ 2\le
j\le n).
\end{equation*}
Write
\[
(r_{00},r_{01},\dots,r_{0,n-1}):=
(s_{11},s_{12},\dots,s_{1n}),
\]
then
\begin{equation*}\label{drgsu}
\Delta B^T=
\begin{bmatrix}
 0-r_{12}&0-r_{13}
 &\dots&0-r_{1n}&0-0
    \\
 r_{00}-r_{22}&r_{01}-r_{23}&\dots&
 r_{0,n-2}-r_{2n}
 &r_{0,n-1}-0
    \\
0-r_{32}&r_{11}-r_{33}&\dots&
 r_{1,n-2}-r_{3n}
 &r_{1,n-1}-0
    \\
0-r_{42}&r_{21}-r_{43}&\dots&
 r_{2,n-2}-r_{4n}
 &r_{2,n-1}-0
   \\ \hdotsfor{5}\\
0-r_{m2}&r_{m-2,1}-r_{m3}&
 \dots&r_{m-2,n-2}-r_{mn}
 &r_{m-2,n-1}-0
 \end{bmatrix}.
\end{equation*}

If $r_{ij}$ and $r_{i'j'}$ belong to
the same diagonal of $\Delta B^T$, then
$i-j=i'-j'$. Hence, the diagonals of
$\Delta B^T$ have no common $r_{ij}$,
and so we can reduce the diagonals of
$B^T$ independently.

The first $n$ diagonals of $\Delta B^T$
starting from the upper right corner
are
\begin{gather*}
0,\ \ (-r_{1n},\: r_{0,n-1}),\ \
(-\underline{r_{1,n-1}},\:
r_{0,n-2}-r_{2n},\:
\underline{r_{1,n-1}}),
   \\
(-r_{1,n-2},\: r_{0,n-3}-r_{2,n-1},\:
r_{1,n-2}-r_{3n},\: r_{2,n-1}),
    \\
(-\underline{r_{1,n-3}},\:
r_{0,n-4}-r_{2,n-2},\:
\underline{r_{1,n-3}-r_{3,n-1}},\:
r_{2,n-2}-r_{4n},\:
\underline{r_{3,n-1}}),\,\dots
\end{gather*}
(we underline linearly dependent
entries in each diagonal), adding them
we make the first $n$ diagonals of
$B^T$ as in $0^{\nevdash}$.

The $(n+1)^{\text{st}}$ diagonal of
$\Delta B^T$ is
\[
  \begin{cases}
(r_{00}-r_{22},\,r_{11}-r_{33},\,\dots,\,
r_{n-2,n-2}-r_{nn}) & \text{if $m=n$}, \\
(r_{00}-r_{22},\,r_{11}-r_{33},\,\dots,\,
r_{n-2,n-2}-r_{nn},\,r_{n-1,n-1})
& \text{if $m>n$.}
  \end{cases}
\]
Adding it, we make the
$(n+1)^{\text{st}}$ diagonal of $B^T$
equal to zero.

If $m> n+1$, then the
$(n+2)^{\text{nd}},
\dots,m^{\text{th}}$ diagonals of
$\Delta B^T$ are
\[
\begin{matrix}
(-{r_{32}},\,r_{21}-r_{43},\,
{r_{32}-r_{54}},\,
\dots,\,r_{n,n-1}),\\
\hdotsfor{1}\\
(-{r_{m-n+1,2}},\,
r_{m-n,1}-r_{m-n+2,3},\,
{r_{m-n+1,2}-r_{m-n+3,4}},\,
\dots,\,r_{m-2,n-1}).
\end{matrix}
\]
Each of these diagonals contains $n$
elements. If $n$ is even, then the
length of each diagonal is even and its
elements are linearly independent; we
make the corresponding diagonals of
$B^T$ equal to zero. If $n$ is odd,
then the length of each diagonal is odd
and the set of its odd-numbered
elements is linearly dependent; we make
all elements of the corresponding
diagonals of $B^T$ equal to zero except
for their last elements (they
correspond to the stars of ${\cal
P}_{nm}$ defined in \eqref{hui}).

It remains to reduce the last $n-1$
diagonals of $B^T$ (the last $n-2$
diagonals if $m=n$). The corresponding
diagonals of $\Delta B^T$ are
\[
\begin{matrix}
-r_{m2},\\
(-r_{m-1,2},\,
r_{m-2,1}-r_{m3}),
    \\
 (-{r_{m-2,2}},\,
r_{m-3,1}-r_{m-1,3},\,
{r_{m-2,2}-r_{m4}}),
   \\
(-{r_{m-3,2}},\,
r_{m-4,1}-r_{m-2,3},\,
{r_{m-3,2}-r_{m-1,4}},\,
r_{m-2,3}-r_{m5}),
  \\ \hdotsfor{1}
    \\
(-r_{m-n+3,2},\,
r_{m-n+2,1}-r_{m-n+4,3},\,\dots,\,
r_{m-2,n-3}-r_{m,n-1}),
\end{matrix}
\]
and, only if $m>n$,
\[
(-r_{m-n+2,2},\,
r_{m-n+1,1}-r_{m-n+3,3},\,\dots,\,
r_{m-2,n-2}-r_{mn}).
\]
Adding these diagonals, we make the
corresponding diagonals of $B^T$ equal
to zero. To preserve the zero
diagonals, we hereafter must take
$r_{m2}=r_{m4}=r_{m6}=\dots=0$ and
arbitrary
$r_{m1},\,r_{m3},\,r_{m5},\,\dots\,$.

Recall that $A$ has the form
$0^{\downarrow}$ defined in \eqref{gu}.
Since
$r_{m1},\,r_{m3},\,r_{m5},\,\dots$ are
arbitrary, we can reduce $A$ to the
form
\[
\begin{bmatrix}
   0_{m-1,n}
\\
  *\ 0\ *\ 0\ \cdots
\end{bmatrix}
\]
by adding $\Delta A$; these additions
preserve the already reduced $B$.

If $m=n$, then we can alternatively
reduce $A$ to the form
\[
\begin{bmatrix}
\hdotsfor{4}
\\ 0&0&\dots&0
 \\ *&0&\dots&0
 \\ 0&0&\dots&0
 \\ *&0&\dots&0
\end{bmatrix}
\]
preserving the form $0^{\swvdash}$ of
$B$.

\section{Off-diagonal blocks of $\cal
D$ that correspond to summands of
$A_{\text{can}}$ of distinct
types}\label{s7}

Finally, we verify the condition (ii)
of Lemma \ref{thekd} for those
off-diagonal blocks of $\cal D$
(defined in Theorem \ref{teo2}(iii))
whose horizontal and vertical strips
contain summands of $A_{\text{can}}$ of
different types.

\subsection{Pairs of blocks ${\cal D}
(H_m(\lambda),\Gamma_n)$} \label{sub9}

Due to Lemma \ref{thekd}(ii), it
suffices to prove that each pair
$(B,A)$ of $n\times 2m$ and  $2m\times
n$ matrices can be reduced to exactly
one pair of the form \eqref{lsiu4} by
adding
\[
(S^T
H_m(\lambda)+ \Gamma_n
R,\:R^T\Gamma_n
+H_m(\lambda)S),\qquad S\in
 {\mathbb C}^{2m\times n},\ R\in
 {\mathbb C}^{n\times 2m}.
\]

Taking $R=0$ and
$S=-H_m(\lambda)^{-1}A$, we reduce $A$
to $0$. To preserve $A=0$, we hereafter
must take $S$ and $R$ such that
$R^T\Gamma_n +H_m(\lambda)S=0$; that
is,
\[
S=-H_m(\lambda)^{-1}
R^T\Gamma_n.
\]
Hence, we can add
\[
 \Delta B=\Gamma_n R
-\Gamma_n^T R
H_m(\lambda)^{-T}H_m(\lambda)
\]
to $B$. Write $P=\Gamma_n^T R$, then
\[
 \Delta B=\Gamma_n
 \Gamma_n^{-T} P
-P
\left({J}_m(\lambda)
\oplus{J}_m(\lambda)^{-T}\right)
\]
Divide $B$ and $P$ into two blocks of
size $n\times m$:
\[
B=[M\ N],\qquad P=[U\
V].
\]
We can add to $M$ and $N$ the matrices
\[
\Delta M:= \Gamma_n
 \Gamma_n^{-T}U
 -U{J}_m(\lambda),\qquad
\Delta N:= \Gamma_n
 \Gamma_n^{-T}V
 -V{J}_m(\lambda)^{-T}.
\]

If $\lambda\ne(-1)^{n+1}$ then by
\eqref{1x12} the eigenvalues of
$\Gamma_n \Gamma_n^{-T}$ and the
eigenvalues of ${J}_m(\lambda)$ and
${J}_m(\lambda)^{-T}$ are distinct, and
we make $M=N=0$.

If $\lambda=(-1)^{n+1}$ then
\[
\Delta M=(-1)^{n}
\begin{bmatrix} 0&&&0
\\2&0&&\\
&\ddots&\ddots&\\
*&&2&0
\end{bmatrix}U-U \begin{bmatrix}
0&1&&0\\&0&\ddots&\\&&\ddots&1
\\ 0&&&0
\end{bmatrix},
\]
\[
\Delta N=(-1)^{n}
\begin{bmatrix} 0&&&0
\\2&0&&\\
&\ddots&\ddots&\\
*&&2&0
\end{bmatrix}V+V \begin{bmatrix}
0&&&0\\1&0&&\\&\ddots&\ddots&
\\ *&&1&0
\end{bmatrix}.
\]
We reduce $M$ to the form
$0^{\nwarrow}$ along its skew diagonals
starting from the upper left corner,
and $N$ to the form $0^{\nearrow}$
along its diagonals starting from the
upper right corner.

\subsection{Pairs of blocks ${\cal D}
(H_m(\lambda),J_n(0))$} \label{sub5}

Due to Lemma \ref{thekd}(ii), it
suffices to prove that each pair
$(B,A)$ of $n\times 2m$ and  $2m\times
n$ matrices can be reduced to exactly
one pair of the form \eqref{lsiu5} by
adding
\[
(S^T
H_m(\lambda)+ J_n(0)
R,\: R^TJ_n(0)
+H_m(\lambda)S),\qquad S\in
 {\mathbb C}^{2m\times n},\ R\in
 {\mathbb C}^{n\times 2m}.
\]

Taking $R=0$ and
$S=-H_m(\lambda)^{-1}A$, we reduce $A$
to $0$. To preserve $A=0$ we hereafter
must take $S$ and $R$ such that
$R^TJ_n(0) +H_m(\lambda)S=0$; that is,
\[
S=-H_m(\lambda)^{-1}
R^TJ_n(0).
\]
Hence we can add
\begin{align*}
 \Delta B:=& {J}_n(0) R
-{J}_n(0)^T R
H_m(\lambda)^{-T}H_m(\lambda)
        \\
=& {J}_n(0)
R-{J}_n(0)^T R
\left({J}_m(\lambda)
\oplus{J}_m(\lambda)^{-T}\right)
\end{align*}
to $B$.

Divide $B$ and $R$ into two blocks of
size $n\times m$:
\[
B=[M\ N],\qquad R=[U\
V].
\]
We can add to $M$ and $N$ the matrices
\[
\Delta M:=
{J}_n(0)U-{J}_n(0)^TU{J}_m(\lambda),\qquad
\Delta N:=
{J}_n(0)V-{J}_n(0)^TV{J}_m(\lambda)^{-T}.
\]

We reduce $M$ as follows. Let
$(u_1,u_2,\dots,u_n)^T$ be the first
column of $U$. Then we can add to the
first column $\vec b_1$ of $M$ the
vector
\begin{align*}
\Delta \vec b_1:=&
(u_2,\,\dots,\,u_n,0)^T-
\lambda
(0,u_1,\,\dots,\,u_{n-1})^T
         \\
=&
  \begin{cases}
    0 & \text{if $n=1$}, \\
(u_2,\,u_3-\lambda
u_1,\,u_4-\lambda
u_2,\,\dots,\,u_n-\lambda
u_{n-2},\,-\lambda
u_{n-1})^T & \text{if
$n>1$}.
  \end{cases}
\end{align*}
The elements of this vector are
linearly independent if $n$ is even,
and they are linearly dependent if $n$
is odd. We reduce $ \vec b_1$ to zero
if $n$ is even, and to the form
$(*,0,\dots,0)^T$ or $(0,\dots,0,*)^T$
if $n$ is odd. Then we successively
reduce the other columns transforming
$M$ to $0$ if $n$ is even and to the
form $0_{nm}^{\updownarrow}$ if $n$ is
odd.

We reduce $N$ in the same way starting
from the last column.

\subsection{Pairs of blocks ${\cal D}
(\Gamma_m,J_n(0))$} \label{sub8}

Due to Lemma \ref{thekd}(ii), it
suffices to prove that each pair
$(B,A)$ of $n\times m$ and  $m\times n$
matrices can be reduced to exactly one
pair of the form \eqref{lsiu6} by
adding
\[
(S^T\Gamma_m+
J_n(0)R,\: R^TJ_n(0)
+\Gamma_mS),\qquad S\in
 {\mathbb C}^{m\times n},\ R\in
 {\mathbb C}^{n\times m}.
\]

Taking $R=0$ and $S=-\Gamma_m^{-1}A$,
we reduce $A$ to $0$. To preserve $A=0$
we hereafter must take $S$ and $R$ such
that $R^TJ_n(0) +\Gamma_mS=0$; that is,
$ S=-\Gamma_m^{-1} R^TJ_n(0).$ Hence,
we can add
\begin{align*}
 \Delta B:&=J_n(0) R
-J_n(0)^T R \Gamma_m^{-T}\Gamma_m
       \\&
       =\begin{bmatrix}
r_{21}&\dots&r_{2m}
           \\
\hdotsfor{3}\\
r_{n1}&\dots&r_{nm}\\
0&\dots&0
\end{bmatrix}
           -(-1)^{m+1}
\begin{bmatrix}
0&\dots&0\\
r_{11}&\dots&r_{1m}
           \\
\hdotsfor{3}\\
r_{n-1,1}&\dots&r_{n-1,m}
\end{bmatrix}
\begin{bmatrix} 1&2&&*
\\&1&\ddots&\\
&&\ddots&2\\ 0 &&&1
\end{bmatrix}
\end{align*}
to $B$. We move along the columns of
$B$ starting from the first column and
reduce $B$ to $0$ if $n$ is even and to
$0^{\updownarrow}$ if $n$ is odd.

\section{Appendix: A transformation that reduces
a matrix to its miniversal deformation}
\label{sect4}

In this section, we fix  an $n\times n$
complex matrix $A$ and a $(0,\!*)$
matrix $\cal D$ of the same size such
that
\begin{equation}\label{a4nf}
{\mathbb C}^{\,n\times
n}=T(A) +
{\cal D}({\mathbb C}),
\end{equation}
in which (see \eqref{eelie} and
\eqref{a2z})
\[
T(A)=\{C^TA+AC\,|\,C\in{\mathbb
C}^{n\times n}\},\qquad
{\cal D}(\mathbb C)=
\bigoplus_{(i,j)\in{\cal
I}({\cal D})} {\mathbb
C} E_{ij},
\]
and all $E_{ij}$ are the matrix units.
We prove that the deformation $A+{\cal
D}(\vec {\varepsilon})$  of $A$ defined
in \eqref{a2z} is miniversal. To this
end, we construct an $n\times n$ matrix
${\cal S}(X)$ satisfying the conditions
(i)--(iii) of Definition \ref{dver}.

For each $P=[p_{ij}]\in\mathbb
      C^{n\times n}$, we write
\[
\|P\|:=\sqrt{\sum |p_{ij}|^2},\qquad
\|P\|_{\cal
D}:=\sqrt{\sum_{(i,j)\notin{\cal
I}(\cal D)} |p_{ij}|^2}.
\]
Note that
\begin{equation*}\label{lk}
\|aP+bQ\|\le
|a|\,\|P\|+|b|\,\|Q\|,\qquad
\|PQ\|\le \|P\|\,\|Q\|
\end{equation*}
for all $a,b\in\mathbb C$ and
$P,Q\in\mathbb C^{n\times n}$; see
\cite[Section 5.6]{hor_John}.

For every $n\times n$ matrix unit
$E_{ij}$, we fix $F_{ij}\in\mathbb
C^{n\times n}$ such that
\begin{equation}\label{8}
E_{ij}+F_{ij}^TA
+AF_{ij}\in {\cal
D}({\mathbb C})
\end{equation}
($F_{ij}$ exists by \eqref{a4nf}); we
take $F_{ij}=0_n$ if $E_{ij}\in {\cal
D}({\mathbb C})$. Write
\begin{equation}\label{kux}
a:=\|A\|,\qquad f:=\sum_{i,j}\|F_{ij}\|.
\end{equation}

For each $n\times n$ matrix  $E$, we
construct a sequence
\begin{equation}\label{rtg}
M_1:=E,\ M_2,\ M_3,\dots
\end{equation}
of $n\times n$ matrices as follows: if
$M_k=[m_{ij}^{(k)}]$ has been
constructed, then $M_{k+1}$ is defined
by
\begin{equation}\label{dei}
A+M_{k+1}:=(I_n+C_k)^T(A+M_k)(I_n+C_k)
\end{equation}
in which
\begin{equation}\label{drj1}
C_k:=\sum_{i,j}m_{ij}^{(k)}F_{ij}.
\end{equation}

In this section, we prove the following
theorem.

\begin{theorem}\label{ttft}
Given $A\in\mathbb C^{n\times n}$  and
a $(0,\!*)$ matrix $\cal D$ of the same
size that satisfy \eqref{a4nf}. Fix
$\varepsilon\in \mathbb R $ such that
\begin{equation}\label{eoj}
0<\varepsilon<\frac
1{\max \{f(a+1)(f+2),3\}}\quad
(\text{see \eqref{kux}})
\end{equation}
and define the neighborhood
\[
U:=\{E\in\mathbb C^{n\times n}\,|\,
\|E\|<\varepsilon^5\}
\]
of $0_n$. Then for each matrix $E\in
U$, the infinite product
\begin{equation}\label{gre}
{\cal
S}(E):=(I_{n}+C_1)(I_{n}+C_2)(I_{n}+C_3)\cdots
\quad (\text{see \eqref{rtg}})
\end{equation}
is convergent,
\begin{equation}\label{msu1} A+D:={\cal
S}(E)^T(A+E){\cal S}(E)\in A+{\cal
D}(\mathbb C),
\end{equation}
and
\begin{equation}\label{geo} \|{\cal
S}(E)-I_n\|< -1+(1+\varepsilon)
(1+\varepsilon ^3) (1+\varepsilon
^5)\cdots,\qquad \|D\|\le\varepsilon ^3.
\end{equation}
The matrix ${\cal S}(E)$ is a function
of the entries of $E$; replacing them
by unknowns $x_{ij}$, we obtain a
matrix ${\cal S}(X)$ that satisfies the
conditions {\rm(i)--(iii)} of
Definition \ref{dver}.
\end{theorem}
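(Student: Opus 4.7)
The proof analyzes the Newton-style iteration \eqref{rtg}--\eqref{drj1} that successively eliminates the components of $E$ lying outside $\mathcal{D}(\mathbb{C})$. The plan has four steps: derive a clean one-step decomposition of the iteration, prove coupled inductive bounds on $\|M_k\|$ and $\|M_k\|_{\mathcal{D}}$, pass to the limit to recover $\mathcal{S}(E)$ and $D$, and verify the power-series statement.

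\emph{Step 1 (one step of the iteration).} Expanding \eqref{dei} gives
\[
M_{k+1} = M_k + C_k^T A + A C_k + R_k, \qquad R_k := C_k^T M_k + M_k C_k + C_k^T(A+M_k)C_k.
\]
Property \eqref{8} says that for every bad index $(i,j)\notin\mathcal{I}(\mathcal{D})$ we have $F_{ij}^T A + A F_{ij} = D_{ij} - E_{ij}$ for some $D_{ij}\in\mathcal{D}(\mathbb{C})$, while $F_{ij}=0$ for good indices. Substituting \eqref{drj1} gives
\[
C_k^T A + A C_k = H_k - M_k^{\mathrm{b}}, \qquad H_k := \!\!\sum_{(i,j)\notin\mathcal{I}(\mathcal{D})}\!\! m_{ij}^{(k)} D_{ij} \in \mathcal{D}(\mathbb{C}),
\]
where $M_k^{\mathrm{b}}$ denotes the projection of $M_k$ onto the complement of $\mathcal{D}(\mathbb{C})$. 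Writing $M_k=M_k^{\mathrm{g}}+M_k^{\mathrm{b}}$ (good plus bad) yields the key identity
\[
M_{k+1}^{\mathrm{g}} = M_k^{\mathrm{g}} + H_k + R_k^{\mathrm{g}}, \qquad M_{k+1}^{\mathrm{b}} = R_k^{\mathrm{b}},
\]
so the linear effect of the iteration kills the bad part, leaving only the quadratic remainder $R_k^{\mathrm{b}}$.

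\emph{Step 2 (coupled induction).} I claim that for every $E\in U$ and every $k\ge 1$,
\[
\|M_k\| \le 2\varepsilon^5 \qquad\text{and}\qquad \|M_k\|_{\mathcal{D}} \le \varepsilon^{2k+3}.
\]
The base case is immediate from $\|M_1\|=\|E\|<\varepsilon^5$. From \eqref{drj1} and \eqref{kux} one has $\|C_k\|\le f\,\|M_k\|_{\mathcal{D}}$, and $\|R_k\| \le 2\|C_k\|\|M_k\| + \|C_k\|^2(a+\|M_k\|)$. Plugging the inductive bounds in and using $2\varepsilon^5\le 1$ gives $\|R_k\|\le f(a+1)(f+2)\,\varepsilon^{2k+8}$, and the choice \eqref{eoj} is precisely what makes $\|M_{k+1}\|_{\mathcal{D}} = \|R_k^{\mathrm{b}}\| \le \varepsilon^{2k+5}$. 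The bound $\|M_{k+1}\|\le 2\varepsilon^5$ follows by telescoping the good-part increments $H_k+R_k^{\mathrm{g}}$, which are both controlled by $\|M_k\|_{\mathcal{D}}$ and therefore form a geometrically summable series starting from $\varepsilon^5$.

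\emph{Step 3 (passage to the limit).} The inequality $\|C_k\|\le f\varepsilon^{2k+3}\le\varepsilon^{2k-1}$ implies $\sum\|C_k\|<\infty$, so the product \eqref{gre} converges to a matrix $\mathcal{S}(E)$ satisfying the first estimate in \eqref{geo}. Since $M_{k+1}-M_k=H_k+R_k$ is also summable, $M_k$ converges to some $D$; the decay $\|M_k\|_{\mathcal{D}}\to 0$ forces $D\in\mathcal{D}(\mathbb{C})$. Passing to the limit in the telescoped identity
\[
A+M_k = \bigl[(I_n+C_1)\cdots(I_n+C_{k-1})\bigr]^T (A+E)(I_n+C_1)\cdots(I_n+C_{k-1})
\]
proves \eqref{msu1}, and summing $\|H_k\|+\|R_k\|$ over $k\ge 1$ yields $\|D\|\le\varepsilon^3$.

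\emph{Step 4 (holomorphy and the main obstacle).} When $E$ is replaced by an indeterminate matrix $X$, each $M_k$ and each $C_k$ is a polynomial in the entries of $X$, so each partial product $\mathcal{S}_N(X):=\prod_{k=1}^N(I_n+C_k)$ is a polynomial. The Step~2 estimates are uniform on $U$, so $\mathcal{S}_N(X)\to\mathcal{S}(X)$ uniformly on the polydisc $U$; a uniform limit of polynomials on an open polydisc is holomorphic and hence represented by a convergent power series, with $\mathcal{S}(0_n)=I_n$ since $E=0$ produces $M_k\equiv 0\equiv C_k$. Together with \eqref{msu1} this is exactly (i)--(iii) of Definition \ref{dver}. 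The main obstacle is the coupled induction in Step~2: one must simultaneously keep $\|M_k\|$ uniformly bounded and drive $\|M_k\|_{\mathcal{D}}$ down fast enough that the products of $(1+\|C_k\|)$ are controlled by the target bound in \eqref{geo}; the explicit constant in \eqref{eoj} is calibrated precisely to absorb the factor $f(a+1)(f+2)$ appearing in the quadratic estimate of $\|R_k\|$.
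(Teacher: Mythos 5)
Your proposal follows essentially the same route as the paper's proof: the same decomposition of one iteration step into a part lying in ${\cal D}(\mathbb C)$ plus a quadratic remainder (the paper's \eqref{18} and \eqref{18de}), the same coupled induction on $\|M_k\|$ and $\|M_k\|_{\cal D}$ (which the paper packages as Lemmas \ref{lem2z} and \ref{lem1} via the auxiliary sequences $\delta_k<\varepsilon^{2k}$, $\tau_k<\varepsilon^3$), and the same passage to the limit together with the Weierstrass theorem for holomorphy. The one quantitative slip is your inductive hypothesis $\|M_k\|\le 2\varepsilon^5$: the good-part increment $H_k=M_k^{\mathrm b}+C_k^TA+AC_k$ has norm of order $\|M_k\|_{\cal D}(1+2af)$, and since \eqref{eoj} only guarantees roughly $2af<\varepsilon^{-1}$, the telescoped sum of increments is $O(\varepsilon^{4})$ rather than $O(\varepsilon^{5})$, so the claimed uniform bound can fail already at $k=2$; replacing it by the weaker bound $\|M_k\|<\varepsilon^{3}$ (which is what the paper's $\tau_k$ delivers) repairs the induction without affecting any of the downstream estimates, including \eqref{geo}.
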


The proof of Theorem \ref{ttft} is
based on two lemmas.

\begin{lemma} \label{lem2z}
Let $\varepsilon\in\mathbb R$,
$0<\varepsilon <1/3$, and let the
sequence of real numbers
\begin{equation}\label{21z}
\delta_1,\
\tau_1,\
\delta_2,\
\tau_2,\
\delta_3,\
\tau_3,\ \dots
\end{equation}
be defined by induction:
\begin{equation*}\label{22z}
\delta_1=\tau_1=\varepsilon^5,\qquad
\delta_{i+1}
=\varepsilon^{-1}\delta_i\tau _i,\qquad
\tau_{i+1}=\tau_i
+\varepsilon^{-1}\delta_i.
\end{equation*}
Then
\begin{equation}\label{23z}
0<\delta_{i}<
\varepsilon ^{2i},\qquad
0<\tau_i<\varepsilon ^{3}\qquad\text{for all
$i=1,2,\dots$}
\end{equation}
\end{lemma}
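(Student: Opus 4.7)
The plan is straightforward induction, but with a \emph{strengthened} hypothesis. Direct induction on the stated bounds $\delta_i<\varepsilon^{2i}$ and $\tau_i<\varepsilon^3$ fails for the $\tau$-recursion: from $\tau_{i+1}=\tau_i+\varepsilon^{-1}\delta_i$ and only $\delta_i<\varepsilon^{2i}$ one gets $\varepsilon^{-1}\delta_i<\varepsilon^{2i-1}$, which for $i=1$ is just $\varepsilon$, hopelessly larger than $\varepsilon^3$. So a tighter bound on $\delta_i$ is needed to control the cumulative effect of adding $\varepsilon^{-1}\delta_i$ to $\tau_i$ infinitely often.

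I would therefore prove the stronger statement
\[
0<\delta_i\le \varepsilon^{2i+3},\qquad 0<\tau_i<\varepsilon^3\qquad (i=1,2,\dots)
\]
by induction on $i$. Positivity is immediate since both recursions are sums or products of positive quantities, so only the upper bounds matter. The base case $i=1$ holds because $\delta_1=\tau_1=\varepsilon^5$, and $\varepsilon^5<\varepsilon^3$ since $\varepsilon<1/3<1$. For the inductive step, assuming the two strengthened bounds hold for all indices up through $i$, the computation
\[
\delta_{i+1}=\varepsilon^{-1}\delta_i\tau_i\le \varepsilon^{-1}\cdot \varepsilon^{2i+3}\cdot \varepsilon^3=\varepsilon^{2(i+1)+3}
\]
gives the bound for $\delta_{i+1}$ directly.

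For the $\tau$-bound I would telescope. From $\tau_{i+1}=\tau_i+\varepsilon^{-1}\delta_i$ one gets
\[
\tau_{i+1}=\tau_1+\varepsilon^{-1}\sum_{j=1}^{i}\delta_j
   \le \varepsilon^5+\varepsilon^{-1}\sum_{j=1}^{\infty}\varepsilon^{2j+3}
   =\varepsilon^5+\frac{\varepsilon^6}{1-\varepsilon^2},
\]
so it suffices to check $\varepsilon^5+\varepsilon^6/(1-\varepsilon^2)<\varepsilon^3$, equivalently $\varepsilon^2+\varepsilon^3/(1-\varepsilon^2)<1$. Under the standing hypothesis $\varepsilon<1/3$ this is transparent: $\varepsilon^2<1/9$ and $\varepsilon^3/(1-\varepsilon^2)<(1/27)/(8/9)=1/24$, and $1/9+1/24<1$. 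This closes the induction, and since $\varepsilon<1$ we have $\varepsilon^{2i+3}<\varepsilon^{2i}$, recovering the weaker stated bound $\delta_i<\varepsilon^{2i}$.

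The only subtle point is recognising that one must strengthen the $\delta$-bound by a uniform extra power $\varepsilon^3$ so that the series $\sum\varepsilon^{-1}\delta_j$ converges geometrically with ratio $\varepsilon^2$ and total mass small enough to keep $\tau$ below $\varepsilon^3$. Everything else is routine manipulation of geometric sums, and the factor $1/3$ in the hypothesis $\varepsilon<1/3$ appears only to make the final numerical inequality $\varepsilon^2+\varepsilon^3/(1-\varepsilon^2)<1$ comfortably true (any bound slightly away from $1$ would work).
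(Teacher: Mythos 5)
Your approach is essentially the paper's: both proofs handle the $\tau$--recursion by telescoping $\tau_{i+1}=\tau_1+\varepsilon^{-1}(\delta_1+\cdots+\delta_i)$ and then summing a geometric series. The difference is in how the first terms of that sum are controlled. You correctly observe that the stated bound $\delta_i<\varepsilon^{2i}$ is too weak for small $i$ (it gives $\varepsilon^{-1}\delta_1<\varepsilon$), and you fix this by strengthening the induction hypothesis to $\delta_i\le\varepsilon^{2i+3}$. The paper instead keeps the stated hypothesis and plugs in the \emph{exact} values $\delta_1=\varepsilon^5$, $\delta_2=\varepsilon^9$ for the first two terms while using $\delta_j<\varepsilon^{2j}$ only for $j\ge 3$, arriving at $\tau_{k+1}<3\varepsilon^4<\varepsilon^3$. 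Your strengthened hypothesis is arguably the cleaner way to organize the same estimate, and it correctly identifies why $\varepsilon<1/3$ (or any bound away from $1$) suffices.

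One arithmetic slip: you have
\[
\varepsilon^{-1}\sum_{j=1}^{\infty}\varepsilon^{2j+3}
=\varepsilon^{-1}\cdot\frac{\varepsilon^{5}}{1-\varepsilon^{2}}
=\frac{\varepsilon^{4}}{1-\varepsilon^{2}},
\]
not $\varepsilon^{6}/(1-\varepsilon^{2})$. The inequality you must verify is therefore $\varepsilon^{5}+\varepsilon^{4}/(1-\varepsilon^{2})<\varepsilon^{3}$, i.e.\ $\varepsilon^{2}+\varepsilon/(1-\varepsilon^{2})<1$, which is tighter than the one you checked but still holds for $\varepsilon<1/3$ since $\varepsilon^{2}<1/9$ and $\varepsilon/(1-\varepsilon^{2})<(1/3)/(8/9)=3/8$, and $1/9+3/8<1$. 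With that correction the proof is complete.
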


\begin{proof}
Reasoning by induction, we assume that
the inequalities \eqref{23z} hold for
$i=1,\dots,k$. Then they hold for
$i=k+1$ since
\[
\delta_{k+1}
=\varepsilon^{-1}\delta_k \tau_k<
\varepsilon^{-1}\varepsilon ^{2k}\varepsilon ^{3}
= \varepsilon ^{2(k+1)}
\]
and
\begin{align*}
\tau_{k+1}&=\tau_k
+\varepsilon^{-1}\delta_k=
\tau_{k-1}+\varepsilon^{-1}\delta_{k-1}+
\varepsilon^{-1}\delta_k=\cdots=
\tau_{1}+\varepsilon^{-1}(\delta_{1}
+\delta_{2}+\dots+
\delta_{k})
      \\&<
\varepsilon ^{5}+\varepsilon^{-1}
(\varepsilon ^{5}
+\varepsilon^{-1} \varepsilon ^{5}\varepsilon ^{5}
+\varepsilon ^{6}+\varepsilon ^{8}
+\varepsilon ^{10}+\cdots)\\
& =\varepsilon ^{5}+\varepsilon ^{4}
+\varepsilon ^{8}+\varepsilon ^{5}
(1+\varepsilon ^{2}+\varepsilon ^{4}+\cdots)
=\varepsilon ^{5}+\varepsilon ^{4}
+\varepsilon ^{8}+\varepsilon ^{5}/(1
-\varepsilon ^{2})\\
&<\varepsilon ^{5}+\varepsilon ^{4}
+\varepsilon ^{8}+2\varepsilon ^{5}
< \varepsilon ^{4}+\varepsilon ^{8}
+\varepsilon ^{4}<3\varepsilon ^4<\varepsilon ^{3}.
\end{align*}
\end{proof}

\begin{lemma} \label{lem1}
Let $\varepsilon \in\mathbb R$ satisfy
\eqref{eoj} and let $k\in\mathbb N$.
Assume that the matrix
$M_k=[m_{ij}^{(k)}]$ from \eqref{rtg}
satisfies
\begin{equation}\label{15}
\|M_k\|_{\cal
D}<\delta_k,\quad
\|M_k\|<\tau_k\qquad (\text{see
\eqref{21z}}).
\end{equation}
Then
\begin{equation}\label{5a}
\|M_{k+1}\|_{\cal D}
<\delta_{k+1} ,\quad
\|M_{k+1}\|<\tau_{k+1}
\end{equation}
and
\begin{equation}\label{5j}
\|C_k\|<\varepsilon^{-1}\delta_k\qquad (\text{see
\eqref{drj1}}).
\end{equation}
\end{lemma}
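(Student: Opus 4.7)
My approach is to expand $M_{k+1}$ from the recursion \eqref{dei}, isolate the part lying in $\mathcal{D}(\mathbb{C})$ using the defining identity \eqref{8} of the $F_{ij}$, and then control the quadratic remainder through $\|C_k\|$. A direct expansion of $(I+C_k)^T(A+M_k)(I+C_k)-A$ yields
\[
M_{k+1}=(M_k+C_k^TA+AC_k)+C_k^TM_k+M_kC_k+C_k^T(A+M_k)C_k.
\]
By the definition \eqref{drj1} of $C_k$ and the property \eqref{8}, the first parenthesis rewrites as
\[
D_k:=M_k+C_k^TA+AC_k=\sum_{i,j}m_{ij}^{(k)}(E_{ij}+F_{ij}^TA+AF_{ij})\in\mathcal{D}(\mathbb{C}).
\]
Setting $R_k:=C_k^TM_k+M_kC_k+C_k^T(A+M_k)C_k$, we thus have the decomposition $M_{k+1}=D_k+R_k$. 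Since $D_k$ contributes nothing off the support $\mathcal{I}(\mathcal{D})$, this immediately gives $\|M_{k+1}\|_{\mathcal{D}}\le\|R_k\|$ and $\|M_{k+1}\|\le\|D_k\|+\|R_k\|$, reducing the whole lemma to norm estimates on $C_k$, $R_k$, and $D_k$.

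The bound \eqref{5j} on $C_k$ comes almost for free. Because $F_{ij}=0$ whenever $(i,j)\in\mathcal{I}(\mathcal{D})$, only the entries of $M_k$ measured by $\|\cdot\|_{\mathcal{D}}$ actually contribute to $C_k$; combining $|m_{ij}^{(k)}|\le\|M_k\|_{\mathcal{D}}$ for $(i,j)\notin\mathcal{I}(\mathcal{D})$ with the definition $f=\sum\|F_{ij}\|$ produces $\|C_k\|\le f\|M_k\|_{\mathcal{D}}<f\delta_k$. The hypothesis \eqref{eoj} forces $f\le f(a+1)(f+2)<\varepsilon^{-1}$, and hence $\|C_k\|<\varepsilon^{-1}\delta_k$.

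For \eqref{5a}, submultiplicativity supplies
\[
\|R_k\|\le 2\|C_k\|\|M_k\|+\|C_k\|^2(a+\|M_k\|),\qquad \|D_k\|\le\|M_k\|+2a\|C_k\|.
\]
Substituting $\|C_k\|\le f\delta_k$, $\|M_k\|<\tau_k<1$, $\delta_k<1$, $a+\|M_k\|<a+1$, and the trivial induction $\delta_k\le\tau_k$ (which follows from Lemma \ref{lem2z} using $\tau_k<1$), both target inequalities---after factoring out $\delta_k\tau_k$ in the $\|\cdot\|_{\mathcal{D}}$ case and $\delta_k$ in the $\|\cdot\|$ case---collapse to the single arithmetic fact
\[
(a+1)(2f+f^2)=f(a+1)(f+2)<\varepsilon^{-1},
\]
which is exactly what \eqref{eoj} provides. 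The main obstacle is really bookkeeping: one has to arrange the expansion so that the linear correction $C_k^TA+AC_k$ merges with $M_k$ into a $\mathcal{D}$-valued matrix (which is what makes the $\|\cdot\|_{\mathcal{D}}$ norm contract), and then verify that the single constant $f(a+1)(f+2)$ simultaneously dominates the factors appearing in both bounds of \eqref{5a}, so that \eqref{eoj} alone suffices.
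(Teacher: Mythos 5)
Your proposal is correct and follows essentially the same route as the paper's proof: the same decomposition $M_{k+1}=(M_k+C_k^TA+AC_k)+R_k$ with the first summand in ${\cal D}(\mathbb C)$ via \eqref{8}, the same bound $\|C_k\|\le f\|M_k\|_{\cal D}<f\delta_k$, and the same arithmetic reduction to $f(a+1)(f+2)<\varepsilon^{-1}$ (the paper also uses $\delta_k\le\tau_k$ implicitly, which you rightly make explicit).
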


\begin{proof}
By \eqref{8},
\[
\sum_{i,j}
m_{ij}^{(k)}E_{ij}+\sum_{i,j}
m_{ij}^{(k)}
F_{ij}^TA+\sum_{i,j}
m_{ij}^{(k)}AF_{ij}\in
{\cal D}({\mathbb C}),
\]
and so
\begin{equation}\label{18}
M_k+C_k^TA+AC_k\in
{\cal D}({\mathbb C}).
\end{equation}

If $(i,j)\in{\cal I}({\cal D})$, then
$E_{ij}\in {\cal D}({\mathbb C})$ and
$F_{ij}=0$ by the definition of
$F_{ij}$. If $(i,j)\notin{\cal I}({\cal
D})$, then $|m_{ij}^{(k)}|<\delta_k$ by
the first inequality in \eqref{15}. The
inequality \eqref{5j} holds because
\[
\|C_k\|\le
\sum_{(i,j)\notin{\cal
I}({\cal D})}
|m_{ij}^{(k)}|\|F_{ij}\|<
\sum_{(i,j)\notin{\cal
I}({\cal D})}
\delta_k\|F_{ij}\|=
\delta_k f<\delta_k\varepsilon^{-1}.
\]
By \eqref{dei} and \eqref{15},
\begin{align}\label{18de}
M_{k+1}&=M_k+C_k^T(A+M_k)+(A+M_k)C_k+C_k^T(A+M_k)C_k,
\\\nonumber \|M_{k+1}\|
&\le\|M_k\|+2\|C_k\|(\|A\|+\|M_k\|)
+\|C_k\|\|A+M_k\|\|C_k\|
 \\\nonumber &<
\tau_k+2\delta_k
f(a+\tau_k)+
\delta_kf(a+\tau_k)\delta_kf
\\\nonumber &=\tau_k+\delta_k
f(a+\tau_k)(2+
\delta_k f)
 \\\nonumber &<
\tau_k+\delta_k
f(a+1)(2+f)
< \tau_k+\delta_k\varepsilon^{-1}=\tau_{k+1}.
\end{align}
By \eqref{18} and \eqref{18de},
\begin{align*}
\|M_{k+1}\|_{\cal
D}&=\|C_k^TM_k+M_kC_k+C_k^T(A+M_k)C_k\| \\
&\le 2\|C_k\|\|M_k\|+
\|C_k\|^2(\|A\|+\|M_k\|)
   \\&<
2\delta_k
f\tau_k+(\delta_k
f)^2(a+\tau_k)
\\&< \delta_k f\tau_k (2+f(a+1))
\\&< \delta_k \tau_k f(2(a+1)+f(a+1))
<\delta_k\tau_k\varepsilon^{-1}=\delta_{k+1},
\end{align*}
which proves \eqref{5a}.
\end{proof}

\begin{proof}[Proof of Theorem \ref{ttft}]
Since $M_1=E\in U$,
$\|M_1\|<\varepsilon^5=\delta_1=\tau_1$.
Hence, the inequalities \eqref{15} hold
for $k=1$. Reasoning by induction and
using Lemma \ref{lem1}, we get
 \begin{equation*}\label{1kn}
 \|M_i\|_{\cal
D}<\delta_i,\quad
\|M_i\|<\tau_i,\quad \|C_i\|<\varepsilon^{-1}\delta_i,
\qquad i=1,2,\dots,
\end{equation*}
and by \eqref{23z}
\begin{align*}
\|C_1\|&+\|C_2\|+\|C_3\|+\cdots<
\varepsilon^{-1}(\delta_1+\delta
_2+\delta _3 +\cdots)\\&<
\varepsilon^{-1}(\varepsilon ^{2}
+\varepsilon ^{4}+\varepsilon
^{6}+\cdots) =\varepsilon(1
+\varepsilon^2+\varepsilon^4+\cdots)
\\&=\varepsilon /(1-\varepsilon ^{2})
=1/(\varepsilon^{-1}-\varepsilon)
< 1/(3-3^{-1}).
\end{align*}
The infinite product \eqref{gre}
converges to some matrix ${\cal S}(E)$
due to \cite[Theorem 4]{inf} (which
generalizes \cite[Theorem
15.14]{mark}). By \eqref{dei} and
\eqref{drj1}, the entries of each $C_i$
are polynomials in the entries of $E$.
Thus, the entries of each
\[
 {\cal
S}_k(E):=(I_{n}+C_1)(I_{n}+C_2)
\cdots(I_{n}+C_k),\qquad k=1,2,\dots,
\]
are polynomials in the entries of $E$.
Since ${\cal S}_k(E)\to {\cal S}(E)$,
the Weierstrass theorem on uniformly
convergent sequences of analytic
functions \cite[Theorem 15.8]{mark}
ensures that the entries of ${\cal
S}(E)$ are holomorphic functions in the
entries of $M$.

The inclusion \eqref{msu1} holds since
$A+M_i\to {\cal S}(E)^T(A+E){\cal
S}(E)$ and $\|M_i\|_{\cal D}
<\delta_{i}\to 0$ as $i\to\infty$.

The inequalities \eqref{geo} hold since
for each $k\in\mathbb N$ we have
\begin{align*}
\|{\cal
S}_k(E)-I_n\|&=
 \|(I_{n}+C_1)(I_{n}+C_2)
\cdots(I_{n}+C_k)-I_n\|\\
&\le\sum_{i\le k}\|C_i\|
+\sum_{i<j\le k}\|C_i\|\,\|C_j\|+\cdots\\
&\le -1+(1+\|C_1\|)(1+\|C_2\|)
(1+\|C_3\|)\cdots\\
&< -1+(1+\varepsilon^{-1}\delta_1)
(1+\varepsilon^{-1}\delta_2)
(1+\varepsilon^{-1}\delta_3)\cdots
\\&< -1+(1+\varepsilon)(1+\varepsilon^3)
(1+\varepsilon ^5)\cdots \quad\text{(by \eqref{23z})
}
\end{align*}
and
\[
M _i\to D,\quad \|M_i\|\le \tau_i<\varepsilon ^3.
\]
If $E=0_n$ then all $M_i=C_i=0_n$, and
so ${\cal S}(0_n)=I_n$.
\end{proof}

\end{document}